\documentclass[a4paper,12pt]{amsart}

\usepackage{float}
\usepackage{euscript,eufrak,verbatim}
\usepackage{graphicx}
\usepackage{amscd,pifont}
\usepackage[usenames]{color}
\usepackage[colorlinks,linkcolor=red,anchorcolor=blue,citecolor=blue]{hyperref}
\usepackage{bbm}
\usepackage{amsmath}
\usepackage{amsthm}
\usepackage[all]{xy}
\usepackage{amssymb}


\newtheorem{thm}{Theorem}[section]
\newtheorem{prop}[thm]{Proposition}

\newtheorem{lem}[thm]{Lemma}

\newtheorem{conj}{Conjecture}[section]
\newtheorem{rem}{Remark}[section]

\makeatletter 
\@addtoreset{equation}{section}
\makeatother  

\def\ZN{\mathbb{Z}_{N}}

\def\Z{\mathbb{Z}}
\def\N{\mathbb{N}}
\def\ZZ{\mathcal{Z}}

\def\Zpp{\mathbb{Z}_{p^n}\times \mathbb{Z}_{p^m}}
\def\Zptp{\mathbb{Z}_{p^2}\times \Z_{p}}
\def\vd{{\rm\mathbf{d}}}

\title[spectral set conjecture on $\mathbb{Z}_{p^2}\times \Z_{p}$]{Equi-distributed property and spectral set conjecture on $\mathbb{Z}_{p^2}\times \Z_{p}$}
\author{Ruxi Shi \\
}
\address{Department of Mathematical Sciences, P.O. Box 3000, 90014 
	University of Oulu, Finland}
\email{ruxi.shi@oulu.fi}
\begin{document}

\maketitle

\begin{abstract}
In this paper, we show an equi-disctributed property in $2$-dimensional finite abelian groups $\Zpp$ where $p$ is a prime number. By using this equi-disctributed property, we prove that Fuglede's spectral set conjecture holds on groups $\mathbb{Z}_{p^2}\times \Z_{p}$, namely, a set in $\mathbb{Z}_{p^2}\times \Z_{p}$ is a spectral set if and only if it is a tile. 
\end{abstract}



\section{Introduction}
Let $G$ be a locally compact abelian group. Let $\widehat{G}$ be the dual group of $G$ which consists of all continuous group characters. Let $\Omega$ be a Borel measurable subset in $G$ with positive finite Haar measure.
The set $\Omega$ is called {\em a spectral set} if there exists a set $\Lambda \subset \widehat{G}$ which is
an Hilbert basis of $L^2(\Omega)$, and that $\Omega$ is  called 
{\em a tile}  of $G$ by translation  if there exists a set $T \subset G$ of translates
such that almost surely $\Omega\oplus T=G$. In 1974, Fuglede \cite{f}  formulated the following so-called spectral set conjecture concerning the equivalence relation between tiles and spectral sets: 

\vspace{4pt}
{\em 	A Borel subset $\Omega\subset G$ of positive and finite Haar measure is a spectral set if and only if it is a tile.} 
\vspace{4pt}

\noindent Fuglede's conjecture has
attracted substantial attention over the last decades, especially in the setting $G=\mathbb{R}^d$. Many
positive results towards the conjecture were established before Tao \cite{t} who disproved the conjecture by showing that the
direction ``Spectral $\Rightarrow$ Tiling" is not tenable for $\mathbb{R}^d$ when $d\ge 5$.
Now it is known that the conjecture does not hold in both directions for $\mathbb{R}^d, d \geq 3$ \cite{fmm,k,km,m}.   But the conjecture is still open in lower dimensions $\mathbb{R}$ and $\mathbb{R}^2$.

In its general setting, few is known about this conjecture, even for finite groups. Hence, the question becomes for which group $G$, the spectral set conjecture holds. For $N\ge 1$, the ring of integers modulo $N$ is denoted by $\ZN=\mathbb{Z}/N\mathbb{Z}$. We only know that Fuglede's spectral set conjecture holds on $\mathbb{Z}_{p^n}$ \cite{l2, ffs}, $\mathbb{Z}_p \times \mathbb{Z}_p$ \cite{imp}, $\mathbb{Z}_{p^nq}$ with $n\ge 1$ \cite{mk}, $p$-adic field $\mathbb{Q}_p$ \cite{ffs, ffls}, $\Z_{pqr}$ with $p,q,r$ different primes  \cite{shi} and very recently $\mathbb{Z}_{p^nq^2}$ with $n\ge 1$ \cite{kmsv}. Nevertheless, we either know few about for which group, the spectral set conjecture fails. We only know that the direction ``spectral sets $\Rightarrow$ tiles" fails on  $\mathbb{Z}_3^6$ \cite{t}, $\Z_8^3$ \cite{km}, $\Z_p^4 \quad ($when $p$ prime and $p \equiv 1 \mod 4)$ and $\Z_p^5 \quad ($when $p$ prime and $p \equiv 3 \mod 4)$ \cite{a}; the direction ``tiles $\Rightarrow$ spectral sets " fails on $(\Z/6\Z)^5$ \cite{mk},  $(\Z/6\Z)^3\times \Z_{6p}$, where $p$ is prime with gcd$(6,p)=1$ \cite{fg} and $(\Z/24\Z)^3$ \cite{fmm}. Actually, these non-tile spectral sets (resp. non-spectral tiles) in finite abelian groups lead to non-tile spectral sets (resp. non-spectral tiles) in euclidean spaces. We remark here that we still don't know whether there exists non-spectral tiles in $\Z_{p^n}^d$ for $d\ge 1$ and $p$ prime. The following conjecture is therefore proposed:

\begin{conj}
Let $p$ be a prime. For all $n,d\ge 1$, the tiles in the groups $\Z_{p^n}^d$ are always spectral sets.
\end{conj}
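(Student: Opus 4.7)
The plan is to approach the conjecture by a double induction on the exponent $n$ and the dimension $d$, using as base cases the known spectral-set theorems for $\mathbb{Z}_{p^n}$ (any $n$, $d=1$; see \cite{l2,ffs}) and for $\mathbb{Z}_p\times\mathbb{Z}_p$ (\cite{imp}), with the $\mathbb{Z}_{p^2}\times\mathbb{Z}_p$ result of the present paper as a guiding model. For the step from $\mathbb{Z}_{p^{n-1}}^d$ to $\mathbb{Z}_{p^n}^d$, one would project a given tile $T$ modulo the $p$-torsion to obtain a multiset in $\mathbb{Z}_{p^{n-1}}^d$ which, after normalization, ought itself to be a tile; invoking the inductive hypothesis yields a spectrum $\Lambda_0$ at the lower level, which must then be lifted to a spectrum of $T$ inside $\widehat{\mathbb{Z}_{p^n}^d}\cong\mathbb{Z}_{p^n}^d$. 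Throughout, the central combinatorial object is the zero set $Z(T)$ of the mask $\widehat{\mathbf{1}_T}$: tiling amounts to $Z(T)\cup\{0\}$ covering the dual group by translates, while spectrality amounts to choosing $|T|$ characters whose pairwise differences lie in $Z(T)\cup\{0\}$.

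The key technical input would be a generalization of the equi-distribution property established in this paper for $\mathbb{Z}_{p^2}\times\mathbb{Z}_p$: one wants to show that every tile in $\mathbb{Z}_{p^n}^d$ is equidistributed along the cosets of a well-chosen family of $\mathrm{GL}_d(\mathbb{F}_p)$-distinguished subgroups, thereby forcing large families of cyclotomic-type vanishings in $\widehat{\mathbf{1}_T}$. With such vanishings in hand, one selects at each level of the filtration $\widehat{G}\supset p\widehat{G}\supset p^2\widehat{G}\supset\cdots$ a system of characters orthogonal to $T$ and assembles them into $\Lambda$. The delicate point is the book-keeping for this assembly: the lifted characters must be compatible across distinct coordinates so that the final set $\Lambda$ has exactly $|T|$ elements and all pairwise differences inside $Z(T)$.

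The principal obstacle, I expect, is the equi-distribution statement itself. The argument in $\mathbb{Z}_{p^2}\times\mathbb{Z}_p$ plausibly rests on a dichotomy between tiles that are fibred over the $\mathbb{Z}_p$-direction and those that are transverse to it; in $\mathbb{Z}_{p^n}^d$ with both parameters large, this dichotomy ramifies into a branching case analysis with no obvious uniform substitute. A natural but ambitious replacement would be a Coven--Meyerowitz type theorem for tiles in $\mathbb{Z}_{p^n}^d$, asserting that $\widehat{\mathbf{1}_T}$ must vanish along a prescribed family of affine flags in $\widehat{G}$ indexed by prime-power divisors of $|T|$, and that conversely this vanishing suffices to produce a spectrum. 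Both directions of such a statement are essentially equivalent to the conjecture, which is consistent with the philosophy that progress in the $p$-group setting proceeds by identifying the correct cyclotomic structure; for this reason I would first aim at settling $d=2$ in full by induction on $n$ with the present paper's theorem as the seed, since that restricted program should already reveal which features of the equi-distribution property are generic and which are special to $n=2$.
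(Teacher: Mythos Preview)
The statement you are attempting is labelled as a \emph{Conjecture} in the paper, and the paper offers no proof of it whatsoever: it is posed as an open problem, motivated by the observation that all known counterexamples to ``Spectral $\Rightarrow$ Tile'' in $\mathbb{R}^d$ come from non-tile spectral sets in some $\mathbb{Z}_{p^n}^d$, whereas no non-spectral \emph{tiles} are known there. The paper's main theorem concerns $\mathbb{Z}_{p^2}\times\mathbb{Z}_p$, which is not even of the form $\mathbb{Z}_{p^n}^d$, so the paper does not verify any instance of the conjecture beyond the previously known cases $\mathbb{Z}_{p^n}$ and $\mathbb{Z}_p\times\mathbb{Z}_p$. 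There is therefore nothing in the paper to compare your proposal against.

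What you have written is not a proof but a research programme, and you say as much yourself: you flag the generalized equi-distribution statement as ``the principal obstacle'' and concede that the Coven--Meyerowitz-type substitute you envisage is ``essentially equivalent to the conjecture.'' Several of the steps in your outline are genuinely open or unjustified as stated. The projection of a tile $T\subset\mathbb{Z}_{p^n}^d$ modulo the $p$-torsion yields only a multiset that tiles at the level of functions; there is no reason it should normalize to a set that tiles $\mathbb{Z}_{p^{n-1}}^d$, and extracting such a set is precisely the structural statement you would need to prove. Likewise, there is no general mechanism for lifting a spectrum $\Lambda_0$ from the quotient: the zero set $\mathcal{Z}_T$ at the top level is not determined by its image below, and the ``book-keeping'' you allude to is where the whole difficulty lives. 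Your description of tiling as ``$Z(T)\cup\{0\}$ covering the dual group by translates'' is also not the correct characterization (compare Lemma~\ref{lem:tiling equivalent}: one needs $\mathcal{Z}_T\cup\mathcal{Z}_S=G\setminus\{0\}$ for some complement $S$). Even the restricted $d=2$ programme you propose as a first target is, to my knowledge, open for all $n\ge 3$. In short, your proposal is a reasonable account of why the conjecture is difficult, not an argument that establishes it.
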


As we have mentioned eariler, the known non-tile spectral sets on $\mathbb{R}^d$ ($d\ge 3$) are constructed by using the non-tile spectral sets on $\Z_{p^n}^d$ for some $n$ (see \cite{fmm,k,km,m}). Since Fuglede's spectral set conjecture is widely open on $\mathbb{R}^2$, it is interesting to know whether Fuglede's spectral set conjecture holds on $2$-dimensional finite abelian groups $\Z_{p^n}^2$ for all $n$. In fact, if Fuglede's conjecture fails on $\mathbb{R}^2$, then the counterexample is probably constructed by a counterexample on $\Z_{p^n}^2$ for some $n$.  We don't know anything about the conjecture on these groups except that Iosevich, Mayeli and Pakianathan \cite{imp} proved that the spectral set conjecture is true on $\Z_p\times \Z_p$.

In this paper, our main interest is Fuglede's spectral set conjecture on $\Zpp$ for $n, m\ge 1$. We first show an equi-distribued property in $\Zpp$. Then we focus on the groups $\mathbb{Z}_{p^2}\times \Z_{p}$. Our main result is the following.
\begin{thm}\label{main thm}
	A set in  $\mathbb{Z}_{p^2}\times \Z_{p}$ is a spectral set if and only if it is a tile of $\mathbb{Z}_{p^2}\times \Z_{p}$ by translation.
\end{thm}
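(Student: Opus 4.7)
The plan is to combine a Fourier-analytic characterization of tiles and spectral sets with the equi-distributed property established earlier in the paper. Put $G = \mathbb{Z}_{p^2}\times\mathbb{Z}_p$, so $|G|=p^3$, and recall that $A\subset G$ is a spectral set with spectrum $\Lambda\subset\widehat{G}$ precisely when $|\Lambda|=|A|$ and $\widehat{\mathbf{1}_A}(\lambda-\mu)=0$ for all $\lambda\neq\mu$ in $\Lambda$, while $A$ tiles with complement $B$ precisely when $|A|\cdot|B|=|G|$ and $\widehat{\mathbf{1}_A}(\xi)\widehat{\mathbf{1}_B}(\xi)=0$ for every nonzero $\xi\in\widehat{G}$. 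In both situations $|A|$ divides $p^3$, so only the cases $|A|\in\{1,p,p^2,p^3\}$ need be considered, and the extreme cases are immediate.

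For $|A|=p$ I would argue directly on characters. The spectrum (respectively tiling complement) must be a set whose pairwise differences force $\widehat{\mathbf{1}_A}$ (respectively $\widehat{\mathbf{1}_B}$) to vanish on a large set of characters. A case analysis over the $p+1$ cyclic subgroups of order $p$ in $\widehat{G}\cong G$ shows that $A$ must be contained in a coset of an order-$p$ subgroup, after which both directions of the equivalence follow from the elementary theory on $\mathbb{Z}_p$.

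The principal case is $|A|=p^2$, and here the equi-distributed property is essential. I would use it to establish the following dichotomy for any $A\subset G$ of size $p^2$ that is either a spectral set or a tile: either $A$ is contained in a subgroup $H<G$ of index $p$, so that the problem reduces to $\mathbb{Z}_{p^2}$ or to $\mathbb{Z}_p\times\mathbb{Z}_p$, where Fuglede's conjecture is known by \cite{l2} and \cite{imp}; or $A$ is equi-distributed in the sense that $|A\cap(g+H)|=p$ for every coset $g+H$ of some index-$p$ subgroup $H$. In the equi-distributed subcase, the vanishing of $\widehat{\mathbf{1}_A}$ on the nontrivial annihilator characters of $H$ lets one explicitly assemble a tiling complement (respectively a spectrum) by combining a coset of a complementary cyclic subgroup of order $p$ with the tile or spectrum inherited from the restricted problem on each $H$-coset.

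The main obstacle I foresee is the dichotomy step, in particular the identification of the correct index-$p$ subgroup $H$: because the factors $\mathbb{Z}_{p^2}$ and $\mathbb{Z}_p$ play asymmetric structural roles in $G$, one must argue carefully that the equi-distributed property, applied in the appropriate coordinate direction, yields a subgroup adapted to the arithmetic of $\widehat{\mathbf{1}_A}$. Once the correct $H$ is identified, the passage between spectra and tiling complements by lifting from the $p$ quotient cosets up to $G$ should be routine.
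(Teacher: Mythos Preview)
Your outline has two concrete problems that would block a complete proof.

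First, your treatment of the case $|A|=p$ rests on a false claim. You assert that a case analysis forces $A$ into a coset of an order-$p$ subgroup, after which ``both directions of the equivalence follow from the elementary theory on $\mathbb{Z}_p$.'' But this is not true: take $A=\{(j,0):0\le j\le p-1\}\subset\mathbb{Z}_{p^2}\times\mathbb{Z}_p$. One checks directly that $(p,0)\in\mathcal{Z}_A$, so $A$ is spectral (with spectrum $\{r(p,0):0\le r\le p-1\}$) and a tile, yet the difference $(1,0)-(0,0)=(1,0)$ has order $p^2$, so $A$ lies in no coset of an order-$p$ subgroup. The paper's Proposition~\ref{prop:sharp A=p} handles $|A|=p$ by four separate cases according to which equivalence class of zeros appears in $\mathcal{Z}_A$, and in most of these cases the tiling complement must be built by hand (using, in one case, the compatible-value machinery of Section~\ref{sec:Compatible values}); there is no reduction to $\mathbb{Z}_p$.

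Second, and more seriously, your dichotomy for $|A|=p^2$ hides the real difficulty. Suppose $A$ is equi-distributed across the $p$ cosets of some index-$p$ subgroup $H$. You propose to assemble a spectrum or tiling complement by ``combining a coset of a complementary cyclic subgroup of order $p$ with the tile or spectrum inherited from the restricted problem on each $H$-coset.'' But the restrictions of $A$ to the different $H$-cosets need not admit \emph{compatible} spectra or complements that glue; nothing in the equi-distribution hypothesis controls how $A$ sits inside each coset. The paper confronts exactly this: Lemma~\ref{lem:(p,0) A or B} is a delicate contradiction argument (using the compatible-value Lemma~\ref{lem:existence of not compatible}) showing that $(p,0)$ must lie in $\mathcal{Z}_A$ or $\mathcal{Z}_B$, after which the structural Lemmas~\ref{lem:(p,0),(1,c)}--\ref{lem:(p,0),(1,c) tile and spectral} pin down a very specific form for one of the two sets. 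Without the compatible-value idea you have no mechanism to rule out the bad configurations, and the ``routine'' lifting you anticipate does not go through. (A smaller but related issue: for spectral sets, $|A|\mid p^3$ is not automatic; the paper only obtains $p\mid |A|$ and $|A|\le p^2$ a priori, and the value $|A|=p^2$ emerges from the structural analysis, not before it.)
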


Haessig et al. \cite{hiprv} proved that for $A\subset \Z_p^d$ and $l\in \Z_p^d$, the equation $\widehat{1}_A(l)=0$ if and only if $A$ is equi-distributed on the $p$ hyperplanes $H_t=\{x: x\cdot l=t \}$ for $t\in \Z_p^d$. Actually the $p$ hyperplanes $H_t$ (for $t\in \Z_p^d$) form a partition of $\Z_p^d$. Iosevich, Mayeli and Pakianathan \cite{imp} used this property to prove Fuglede's spectral set conjecture valid on $\Z_p \times \Z_p$. But such property is not true in general or over other fields. Thus we need other kind of equi-distributed property in general case. In this paper, we prove such equi-distributed property in $\mathbb{Z}_{p^n}\times \Z_{p^m}$ in Lemma \ref{lem:hyperplane}. It is the main tool to prove Theorem \ref{main thm}. Actually, Lemma \ref{lem:hyperplane} shows us that it is not equi-distributed on each set of a certain partition of $\mathbb{Z}_{p^n}\times \Z_{p^m}$, but that it is equi-distributed in some chosen planes. This is the main difference between the equi-distributed property in $\mathbb{Z}_{p^n}\times \Z_{p^m}$ and the one in $\Z_p\times \Z_p$.

The article is organized as follows. In Section \ref{sec:Preliminary}, we discuss some basic properties of spectral sets and tiles in finite abelian groups. In Section \ref{sec:Equi-distribution on planes}, we prove an equi-distributed property for a set in $\Zpp$ whenever $\ZZ_A$ is not empty. In Section \ref{sec:Compatible values}, we define the compatible value and then discuss some properties of it. In Section \ref{sec:Tiles implying Spectral sets}, we prove the ``Tiles $\Rightarrow$ Spectral sets" part of Theorem \ref{main thm}. In Section \ref{Spectral sets implying Tiles}, we prove the ``Spectral sets $\Rightarrow$ Tiles" part of Theorem \ref{main thm}.


\subsection*{Notation}

\begin{itemize}
	\item  For $N\in \N$, we denote by $\Z_N$ the cyclic group $\Z/N\Z$ or sometimes the finite set $\{0,1, \dots, N-1 \}$.
	\item For a finite abelian group $G$, denote by $G^*$ the set of all invertible elements in $G$.
\end{itemize}

\section{Preliminary}\label{sec:Preliminary}
In this section, we discuss some basic properties spectral sets and tiles in finite groups.

Let $G$ be a finite abelian group. Let $\widehat{G}$ be the dual group of $G$. It is well-known that $\widehat{G}$ is also a finite abelian group and furthermore isomorphic to $G$. Consequently, we can write 
$$
\widehat{G}=\{\chi_g: g\in G \}
$$
with the commutative multiplicity 
$$
\chi_g \cdot \chi_{g'}=\chi_{g+g'},~\forall~g,g'\in G.
$$
\subsection{Spectral sets in $G$}

We recall that a subset $A\subset G$ is said to be \textit{spectral} if there is a set $B\subset G$ such that 
$$
\{\chi_b: b\in B \}
$$
forms an orthogonal basis in $L^2(A)$. In such case, the set $B$ is called a \textit{spectrum} of $A$ and the pair $(A,B)$  called a \textit{spectral pair}. Since the dimension of $L^2(A)$ is $\sharp A$, the pair $(A ,B)$ being a spectral pair is equivalent to that $\sharp A=\sharp B$ and meanwhile the set $\{\chi_b: b\in B \}$ is orthogonal in $L^2(A)$, that is to say, 
\begin{equation}\label{spectral condition}
\sharp A=\sharp B ~\text{and}~\sum_{a\in A} \chi_{b}\cdot \overline{\chi_{b'}}(a)=0, ~\forall b,b'\in B, b\not=b'.
\end{equation}
This means that the complex matrix $M=\left(\chi_b(a)\right)_{b\in B, a\in A}$ is a complex Hadamard matrix, i.e. $M\overline{M}^{T}=(\sharp A)I$ where $M^T$ is the transpose of $M$ and $I$ is the identity matrix. Since $\sharp A=\sharp B$, it follows that $\overline{M}^TM=(\sharp B)I$. This deduces that $B$ is a spectral set with spectrum $A$.

Denote by $\mathcal{Z}_A=\{g\in G: \widehat{1}_A(g)=0 \}$ the set of zeros of the Fourier transform of the indicator function $1_A$. Since 
$$\widehat{1}_A(g)=\sum_{a\in A} \overline{\chi_g}(a), $$
we restate the above equivalent definition of spectral sets as follows. 
\begin{prop}\label{Equivalence of spectral set}
	Let $A, B\subset G$. The following are equivalent.
	\begin{itemize}
		\item [(1)] $(A,B)$ is a spectral pair.
		\item [(2)] $(B,A)$ is a spectral pair.
		\item [(3)] $\sharp A=\sharp B$; $(B-B)\setminus \{0\}\subset \mathcal{Z}_A$.
	\end{itemize}
\end{prop}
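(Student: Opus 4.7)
The plan is to split the equivalence into two pieces, $(1)\Leftrightarrow(3)$ and $(1)\Leftrightarrow(2)$, both of which are essentially unwindings of the definitions introduced just before the statement.

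First I would prove $(1)\Leftrightarrow(3)$ directly from the orthogonality condition \eqref{spectral condition}. For distinct $b,b'\in B$, the multiplicativity $\chi_b\cdot\overline{\chi_{b'}}=\chi_b\cdot\chi_{-b'}=\chi_{b-b'}$ lets one rewrite
\[
\sum_{a\in A}\chi_b(a)\overline{\chi_{b'}(a)}=\sum_{a\in A}\chi_{b-b'}(a)=\overline{\widehat{1}_A(b-b')},
\]
using the stated formula $\widehat{1}_A(g)=\sum_{a\in A}\overline{\chi_g(a)}$. Thus the orthogonality in \eqref{spectral condition} is equivalent to $\widehat{1}_A(b-b')=0$ for all distinct $b,b'\in B$, i.e.\ $(B-B)\setminus\{0\}\subset\mathcal{Z}_A$. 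Combined with the equi-cardinality clause that appears in both $(1)$ and $(3)$, this is exactly condition $(3)$.

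For $(1)\Leftrightarrow(2)$ I would use the matrix viewpoint already sketched in the preamble to the proposition. Given $(1)$, the character array $M:=(\chi_b(a))_{b\in B,\,a\in A}$ is a \emph{square} matrix of size $\sharp A=\sharp B$, and orthogonality amounts to $M\overline{M}^T=(\sharp A)\,I$. Because $M$ is square, $M/\sqrt{\sharp A}$ is unitary, so one also has $\overline{M}^TM=(\sharp A)\,I$. Reading off the $(a,a')$-entry of this identity gives
\[
\sum_{b\in B}\chi_b(a)\overline{\chi_b(a')}=0\quad\text{for all distinct }a,a'\in A,
\]
which is precisely the orthogonality of $\{\chi_a:a\in A\}$ in $L^2(B)$. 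Together with $\sharp B=\sharp A$, this is condition $(2)$. The reverse implication $(2)\Rightarrow(1)$ is obtained by swapping the roles of $A$ and $B$.

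There is no genuine obstacle here: the proposition is essentially a dictionary between the three equivalent formulations of Hadamard orthogonality. The only point that must not be overlooked is that the equi-cardinality $\sharp A=\sharp B$ is indispensable for $(1)\Leftrightarrow(2)$, since it is precisely what makes $M$ square and allows the left/right symmetry of the Hadamard identity to be invoked.
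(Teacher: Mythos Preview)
Your proof is correct and follows essentially the same approach as the paper. The paper does not give a separate proof of the proposition; the argument is contained in the paragraphs immediately preceding it, where the Hadamard-matrix identity $M\overline{M}^T=(\sharp A)I \Rightarrow \overline{M}^TM=(\sharp B)I$ (using $\sharp A=\sharp B$) yields $(1)\Leftrightarrow(2)$, and the formula $\widehat{1}_A(g)=\sum_{a\in A}\overline{\chi_g(a)}$ is invoked to identify the orthogonality sums with values of $\widehat{1}_A$ for $(1)\Leftrightarrow(3)$ --- exactly your two steps.
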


\subsection{Tiles in $G$}
Recall that a subset $A\subset G$ is said to be a \textit{tile} if there is a set $T\subset G$ such that 
$$
\bigsqcup_{t\in T} (A+t)=G.
$$
In such case, the set $T$ is called a \textit{tiling complement} of $A$ and the pair $(A,T)$ is called a \textit{tiling pair}. Using Fourier transform, we have the following equivalent definitions of tiles in $G$.

\begin{lem}\label{lem:tiling equivalent}
Let $A,B$ be subsets in $G$. Then the following statements are equivalent. 
	\begin{itemize}
		\item [(1)] $(A,B)$ is a tiling pair.
		\item [(2)] $(B,A)$ is a tiling pair.
		\item [(3)] $A\oplus B=G$.
		\item [(4)] $\sharp A\cdot\sharp B=\sharp G$ and $\ZZ_A \cup \ZZ_B = G\setminus \{0\}$. 
	\end{itemize}
\end{lem}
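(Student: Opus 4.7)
The plan is to treat the equivalences in two groups. The implications $(1)\Leftrightarrow (2)\Leftrightarrow (3)$ are essentially unpacking definitions: the symbol $A\oplus B = G$ encodes that every element of $G$ has a unique representation as $a+b$ with $a\in A$, $b\in B$, which is manifestly symmetric in the roles of $A$ and $B$; this is the same as saying the translates $\{A+b : b\in B\}$ partition $G$, which is the definition of $(A,B)$ being a tiling pair (and similarly with roles swapped). So I would just record this in one short paragraph.

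The content of the lemma is $(3)\Leftrightarrow (4)$, and my approach is to pass through the convolution identity $1_A * 1_B$. The point is that, for finite abelian groups, $A\oplus B = G$ holds if and only if
\[
(1_A * 1_B)(x) = \#\{(a,b)\in A\times B : a+b = x\} = 1 \quad \text{for every } x\in G,
\]
that is $1_A * 1_B = 1_G$. Taking Fourier transforms (using the convention $\widehat{f}(g)=\sum_x \overline{\chi_g(x)}f(x)$ already fixed in the paper) and using $\widehat{f*h}=\widehat{f}\cdot\widehat{h}$ as well as $\widehat{1_G}(g) = \#G \cdot \mathbf{1}_{g=0}$, this is in turn equivalent to
\[
\widehat{1_A}(g)\,\widehat{1_B}(g) \;=\; \#G\cdot \mathbf{1}_{g=0} \quad \text{for every } g\in G.
\]

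From here both directions fall out directly. For $(3)\Rightarrow (4)$: evaluating the above at $g=0$ gives $\#A \cdot \#B = \#G$ (since $\widehat{1_A}(0)=\#A$ etc.), and evaluating at $g\neq 0$ forces $\widehat{1_A}(g)\widehat{1_B}(g)=0$, i.e.\ $g\in \ZZ_A\cup \ZZ_B$. For $(4)\Rightarrow (3)$: the hypothesis gives exactly the same pointwise identity on $\widehat{1_A}\cdot\widehat{1_B}$, so Fourier inversion yields $1_A * 1_B = 1_G$, and the nonnegative-integer-valued function $1_A * 1_B$ being identically $1$ means each $x\in G$ has exactly one representation $x=a+b$, i.e.\ $A\oplus B = G$.

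I do not expect a real obstacle here; the only point to watch is that one needs $1_A*1_B$ to take the value $1$, not merely to have the right total mass, in order to conclude the sum is direct — but this is automatic from the pointwise Fourier identity together with Fourier inversion, since the inverse transform recovers the function pointwise.
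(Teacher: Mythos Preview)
Your proposal is correct and follows essentially the same route as the paper: the equivalences $(1)\Leftrightarrow(2)\Leftrightarrow(3)$ are declared trivial, and the substantive part is handled by rewriting the tiling condition as $1_A*1_B=1$ and applying the Fourier transform to obtain $\widehat{1}_A\cdot\widehat{1}_B=(\sharp G)\delta_0$. If anything, your write-up is slightly more careful than the paper's, since you explicitly invoke Fourier inversion for the direction $(4)\Rightarrow(3)$, whereas the paper's proof only spells out the forward implication.
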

\begin{proof}
	It is trivial to see that $(1) \Leftrightarrow (2) \Leftrightarrow (3)$. It is sufficient to prove $(1) \Leftrightarrow (4)$. In fact,  the pair $(A,B)$ being a tiling pair is equivalent to
	$$\sum_{b\in B} 1_A(x-b) =1, ~\forall~x\in G,$$
	that is to say,
	\begin{equation}\label{eq:AB tile}
	1_A * 1_B =1.
	\end{equation}
	Taking Fourier transformation of both sides of $(\ref{eq:AB tile})$, we get
	$$
	\widehat{1}_A \cdot \widehat{1}_B=(\sharp G) \delta_0,
	$$
	which implies $\sharp A\cdot\sharp B=\sharp G$ and $\ZZ_A \cup \ZZ_B = G\setminus \{0\}$.
\end{proof}

\section{Equi-distribution on planes}\label{sec:Equi-distribution on planes}

In this section, we prove an equi-distributed property for a set in $\Zpp$ ($n,m\in \N$) whenever $\ZZ_A$ is not empty.

Without loss of generality, we assume in this section that $n>m$. Let $\xi_n=e^{2\pi i \frac{1}{p^n}}$ be a primitive $p^n$-th root of unity. For $\vd=(d_1,d_2), \vd'=(d_1', d_2') \in \Zpp$, we define the inner product in $\Zpp$ by the formula
$$
\langle \vd, \vd' \rangle=d_1d_1'+p^{n-m}d_2d_2' \in \Z_{p^n}.
$$
We define 
$$
H_{n,m}(\textbf{d},t):=\{\textbf{x}\in \Zpp: \langle \textbf{x}, \textbf{d} \rangle=t \},
$$
for $\textbf{d}\in \Zpp$ and $t\in \Z_{p^n}$. We call such set \textit{a plane} in $\Zpp$.

For $r\in \Z_{p^n}^*$ and ${\rm\mathbf{d}}=(d_1,d_2)\in \Zpp$, we define the scalar product by
$$
r\vd=(\tilde{d_1}, \tilde{d_2})\in \Zpp,
$$
where $\tilde{d_1}\equiv rd_1 \mod \Z_{p^n}$ and $\tilde{d_2}\equiv rd_2 \mod \Z_{p^m}$.

Now we state the main result in this section.
\begin{lem}\label{lem:hyperplane}
	Let $A\subset \Zpp$ and ${\rm\mathbf{d}}\in \Zpp$. The following are equivalent.
	\begin{itemize}
		\item[(1)] $\hat{1}_A({\rm\mathbf{d}})=0$.
		\item[(2)] $\hat{1}_A(r{\rm\mathbf{d}})=0$ for any $r\in \Z_{p^n}^*$.
		\item[(3)] $\sharp (A\cap H({\rm\mathbf{d}}, t))=\sharp (A\cap H({\rm\mathbf{d}}, t'))$ if $p^{n-1}\mid t-t'$.	
	\end{itemize}
\end{lem}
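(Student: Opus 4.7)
The plan is to reformulate all three conditions in terms of the counting function $c_t:=\sharp(A\cap H(\vd,t))$ for $t\in \Z_{p^n}$. Grouping the defining sum of $\widehat{1}_A(\vd)$ by the value of $\langle a,\vd\rangle\in \Z_{p^n}$, one gets
\begin{equation*}
\widehat{1}_A(\vd)=\sum_{a\in A}\xi_n^{-\langle a,\vd\rangle}=\sum_{t=0}^{p^n-1} c_t\,\xi_n^{-t}.
\end{equation*}
In this language, $(1)$ says that this weighted sum of $p^n$-th roots of unity vanishes, while $(3)$ says that $c_t$ is constant on each arithmetic progression $O_s:=\{s,s+p^{n-1},\dots,s+(p-1)p^{n-1}\}$, $s=0,\dots,p^{n-1}-1$. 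The implication $(2)\Rightarrow(1)$ is immediate by taking $r=1$, and the easy half $(3)\Rightarrow(1)$ follows by summing along the $O_s$ and pulling out the factor $\sum_{j=0}^{p-1}(\xi_n^{-p^{n-1}})^{j}=0$, since $\xi_n^{-p^{n-1}}$ is a primitive $p$-th root of unity.

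The main step is $(1)\Rightarrow(3)$, which rests on the cyclotomic structure of $\mathbb{Q}(\xi_n)$. The minimal polynomial of $\xi_n^{-1}$ over $\mathbb{Q}$ is $\Phi_{p^n}(X)=1+X^{p^{n-1}}+\dots+X^{(p-1)p^{n-1}}$, of degree $(p-1)p^{n-1}$. Setting $P(X):=\sum_t c_tX^t\in\Z[X]$, which has degree at most $p^n-1$, the hypothesis $P(\xi_n^{-1})=0$ forces $\Phi_{p^n}\mid P$ in $\mathbb{Q}[X]$. A dimension count shows that the $\mathbb{Q}$-vector space of coefficient vectors $(c_t)$ satisfying this divisibility has dimension exactly $p^n-1-((p-1)p^{n-1}-1)=p^{n-1}$; the $p^{n-1}$ indicator vectors $\mathbf{1}_{O_s}$ already lie in this space (by the $(3)\Rightarrow(1)$ direction above) and are linearly independent because they have disjoint supports, so they form a basis. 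Hence $(c_t)$ is a $\mathbb{Q}$-linear combination of these indicators, i.e., $c_t$ is constant on each $O_s$, which is precisely condition $(3)$.

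To deduce $(3)\Rightarrow(2)$, I use the scaling behavior of the planes: by construction of $r\vd$, one has $\langle x,r\vd\rangle\equiv r\langle x,\vd\rangle\pmod{p^n}$ for every $x\in\Zpp$ and $r\in\Z_{p^n}^*$, so
\begin{equation*}
H(r\vd,t)=H(\vd,r^{-1}t).
\end{equation*}
Consequently the counting function associated to $r\vd$ is the reshuffled sequence $t\mapsto c_{r^{-1}t}$; since $r^{-1}$ is still a unit modulo $p^{n-1}$, multiplication by $r^{-1}$ permutes the orbits $O_s$, and condition $(3)$ is stable under $\vd\mapsto r\vd$. Applying the already established implication $(3)\Rightarrow(1)$ to $r\vd$ then yields $\widehat{1}_A(r\vd)=0$, which is $(2)$.

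The main obstacle is the cyclotomic step in $(1)\Rightarrow(3)$: one must pin down the $\mathbb{Q}$-kernel of the evaluation map $\mathbb{Q}[X]/(X^{p^n}-1)\to\mathbb{Q}(\xi_n)$, equivalently that every vanishing integer combination of $p^n$-th roots of unity decomposes into the ``vertical'' orbit relations $\sum_{j=0}^{p-1}\xi_n^{\,s+jp^{n-1}}=0$. Once this structural fact is in hand, the remaining work is routine bookkeeping with the bilinear form $\langle\cdot,\cdot\rangle$ and the scaling $r\vd$.
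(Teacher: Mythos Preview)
Your proof is correct and follows essentially the same strategy as the paper: both rewrite $\widehat{1}_A(\vd)$ as $P(\xi_n^{\pm 1})$ for the integer polynomial $P(X)=\sum_t c_t X^t$ and use that $\Phi_{p^n}(X)=\sum_{j=0}^{p-1}X^{jp^{n-1}}$ divides $P$ to obtain the equidistribution condition $(3)$; your dimension-count packaging is equivalent to the paper's direct coefficient comparison in $P=Q\,\Phi_{p^n}$ with $\deg Q\le p^{n-1}-1$. The one minor difference is in reaching $(2)$: the paper invokes Galois theory in one line (the automorphism $\xi_n\mapsto\xi_n^r$ carries the vanishing sum for $\vd$ to that for $r\vd$), whereas you route through $(3)$ via the explicit scaling identity $H(r\vd,t)=H(\vd,r^{-1}t)$, which makes the argument more self-contained at the cost of a few extra lines.
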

\begin{proof}
	$(2) \Rightarrow (1):$ It is trivial.

	\medskip

	$(1)\Rightarrow (2):$ Denote by $\vd=(d_1, d_2)$. The fact that $\hat{1}_A({\rm\mathbf{d}})=0$ is equivalent to
	$$
	\sum_{(a_1,a_2)\in A} e^{2\pi i (\frac{a_1d_1}{p^n}+\frac{a_2d_2}{p^m}  )}=0,
	$$
	that is to say,
	$$
	\sum_{(a_1,a_2)\in A} e^{\frac{2\pi i( a_1d_1+p^{n-m}a_2d_2)}{p^n}}=0.
	$$
	It follows from Galois theory that 
	$$
	\sum_{(a_1,a_2)\in A} e^{\frac{2\pi i r(a_1d_1+p^{n-m}a_2d_2)}{p^n}}=0,~\text{for any}~r\in \Z_{p^n}^*,
	$$
	which means that
	$$
	\sum_{(a_1,a_2)\in A} e^{2\pi i (\frac{a_1(rd_1)}{p^n}+\frac{a_2(rd_2)}{p^m}  )}=0,~\text{for any}~r\in \Z_{p^n}^*.
	$$
	We obtain that $\hat{1}_A(r{\rm\mathbf{d}})=0$ for any $r\in \Z_{p^n}^*$.
	\medskip
	
	$(1)\Leftrightarrow (3):$ It is easy to see that 
	$$
	\hat{1}_A(\textbf{d})=\frac{1}{p^{n+m}} \sum_{\textbf{x}\in \Zpp} e^{\frac{2\pi i \langle \textbf{x},\textbf{d} \rangle}{p^n}}1_A(\textbf{x}).
	$$
	It follows that 
	$$
	0=\sum_{\textbf{x}\in \Zpp} \xi_n^{\langle \textbf{x},\textbf{d} \rangle}1_A(\textbf{x})
	=\sum_{t\in \Z_{p^n}} \xi_n^{t}\sum_{\langle \textbf{x},\textbf{d} \rangle=t} 1_A(\textbf{x}).
	$$
	Let 
	$$
	n(t)=\sharp (A\cap H_{n,m}(\vd, t))=\sum_{\langle \textbf{x},\textbf{d} \rangle=t} 1_A(\textbf{x})\in \Z.
	$$
	Then we have
	$$
	\sum_{t\in \Z_{p^n}} n(t) \xi_n^{t}=0.
	$$
	It follows that $\xi_n$ is a root of the polynomial
	$$
	P(X)=\sum_{t\in \Z_{p^n}} n(t) X^{t}\in \Z[X].
	$$
	Recall that the minimal polynomial over $\Z$ of $\xi_n$ is 
	$$
	\Phi_{p^n}(X)=\Phi_p(X^{p^{n-1}})=1+X^{p^{n-1}}+X^{2p^{n-1}}+\cdots+ X^{(p-1)p^{n-1}}.
	$$
	Therefore there exists $Q\in \Z[X]$ such that 
	$$
	P(X)=Q(X)\Phi_{p^n}(X).
	$$
	Hence, we conclude that $n(t)=n(t')$ if $p^{n-1}\mid t-t'$ 
\end{proof}

\begin{rem}\label{rem:type of zeros}
	Now we are concerned with the case $\Zptp$. By Lemma \ref{lem:hyperplane} (2), we define the relation $\vd \sim \vd'$ if there exists $r\in \Z_{p^2}^*$ such that $\vd=r\vd'$.
	Thus the equivalent classes in $\Zptp$ by $\sim$ are
	$$
	(0,1), (1,0), (p,0), (p,c) ~\text{and}~(1,c)~\text{for all}~c\in \Z_p\setminus\{0\}.
	$$ 
	Thus when we study the set of zeros $\mathcal{Z}_{\Zptp}$, we only need to consider the elements which have the above forms.
\end{rem}

\section{Compatible values}\label{sec:Compatible values}
In this section, we introduce the notion of the compatible value for $p$ pairs. This notion will be helpful to deduce some properties of spectral sets and tiles in $\Zptp$, which will be shown in Section \ref{sec:Properties of spectral sets and tiles}.

Given $p$ pairs $(x_i,y_i)_{0\le i\le p-1} \in (\Z_p\times \Z_p)^p$, we consider the value $c\in \Z_p$ such that
$$
x_i+cy_i\not\equiv x_j+cy_j \mod p, ~\text{for all}~i\not=j,
$$
that is to say,
\begin{equation}\label{eq: linear c}
\{ x_0+cy_0, x_1+cy_1, \cdots, x_{p-1}+cy_{p-1}\}\equiv\{ 0,1, \dots, p-1\} \mod p.
\end{equation}
If it is the case, we call the value $c$ \textit{compatible} with the $p$ pairs $(x_i,y_i)_{0\le i\le p-1}$. 

It is not hard to see that for every $a\in \Z_p$, each element in $\Z_p$ is compatible with $(i,a)_{0\le i\le p-1}$. However, given some special $(x_i,y_i)_{0\le i\le p-1}$, we will see that some element in $\Z_p$ can not be compatible with  $(x_i,y_i)_{0\le i\le p-1}$. 
\begin{lem}\label{lem:existence of not compatible}
	Let $(x_i,y_i)_{0\le i\le p-1} \in (\Z_p\times \Z_p)^p$. Suppose that $x_i\not=x_j$ for all $0\le i\not=j\le p-1$. Suppose that not all $y_j$ are the same for $0\le j\le p-1$. Then there exists $c\in \Z_p^*$ such that $c$ is not compatible with  $(x_i,y_i)_{0\le i\le p-1}$.
\end{lem}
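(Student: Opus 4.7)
The plan is to exhibit an explicit $c\in\Z_p^*$ that forces two of the values $x_i+cy_i$ to coincide, thereby violating (\ref{eq: linear c}). Being not compatible just means that the map $i\mapsto x_i+cy_i$ fails to be a bijection on $\Z_p$, so it suffices to produce a single collision $x_i+cy_i\equiv x_j+cy_j\pmod p$ with $i\ne j$.

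By hypothesis there exist indices $i_0\ne j_0$ with $y_{i_0}\ne y_{j_0}$. The collision condition for this particular pair,
$$
x_{i_0}+cy_{i_0}\equiv x_{j_0}+cy_{j_0}\pmod p,
$$
rearranges to
$$
c\,(y_{i_0}-y_{j_0})\equiv x_{j_0}-x_{i_0}\pmod p,
$$
and since $y_{i_0}-y_{j_0}\in\Z_p^*$, this uniquely determines
$$
c:=(x_{j_0}-x_{i_0})(y_{i_0}-y_{j_0})^{-1}\in\Z_p.
$$
The hypothesis that the $x_i$'s are pairwise distinct is exactly what prevents $c$ from collapsing to $0$: because $i_0\ne j_0$, we have $x_{j_0}-x_{i_0}\ne 0$, so $c\in\Z_p^*$.

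With this choice, indices $i_0$ and $j_0$ produce the same value in the collection $\{x_i+cy_i\}_{0\le i\le p-1}$, so this collection cannot be $\{0,1,\dots,p-1\}\bmod p$, i.e.\ $c$ is not compatible with $(x_i,y_i)_{0\le i\le p-1}$.

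There is essentially no obstacle here; the only point to be careful about is the verification that $c\ne 0$, which is where the distinctness assumption on the $x_i$'s is used. (Note in passing that the hypotheses are sharp: if all $y_j$ coincide, then every $c$ is compatible, while if two of the $x_i$'s coincide, the "obvious" collision is present for every $c$ but no nonzero $c$ can be guaranteed.)
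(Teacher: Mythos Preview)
Your proof is correct and follows essentially the same route as the paper's: both pick two indices where the $y$-values differ and solve the linear equation $c(y_{i_0}-y_{j_0})=x_{j_0}-x_{i_0}$ for the forcing value of $c$, using the distinctness of the $x_i$'s to ensure $c\neq 0$. The only cosmetic difference is that the paper first reduces (without loss of generality) to $x_i=i$ before writing down the same formula, whereas you work directly with the general $x_i$'s and make the verification $c\in\Z_p^*$ explicit.
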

\begin{proof}
	Without loss of generality, we assume that $x_i=i$ for all $0\le i\le p-1$. Let $j, k\in \Z_p$ such that $y_j=\alpha$ and $y_k=\beta$ with $\alpha\not=\beta$. Let 
$$
u=(k-j)(\alpha-\beta)^{-1} \in \Z_p.
$$	
	It follows that
	$$
	x_j+uy_j\equiv j+(k-j)(\alpha-\beta)^{-1}\cdot \alpha \equiv (k\alpha-j\beta)(\alpha-\beta)^{-1}  \mod p,
	$$
	and 
	$$
	x_k+uy_k\equiv k+(k-j)(\alpha-\beta)^{-1}\cdot \beta \equiv (k\alpha-j\beta)(\alpha-\beta)^{-1} \mod p.
	$$
	It implies that 
	$$x_j+uy_j \equiv x_k+uy_k \mod p.$$
	Hence, the value $u$ is not compatible with  $(x_i,y_i)_{0\le i\le p-1}$.
\end{proof}

The following lemma tells that the optimum in Lemma \ref{lem:existence of not compatible} can be achieved in some case.
\begin{lem}\label{lem:compatible p-1}
	Let $r\in \Z_p^{*}$. Then the value $c\in \Z_p$ is compatible with $(ri,i)_{0\le i\le p-1}$ if and only if $c\not=p-r$.
\end{lem}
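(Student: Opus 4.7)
The plan is a direct computation. With $x_i = ri$ and $y_i = i$, the quantity appearing in the compatibility condition \eqref{eq: linear c} simplifies to
\[
x_i + c y_i \;=\; ri + ci \;=\; (r+c)\,i \pmod{p}.
\]
Thus the set $\{x_i + c y_i : 0 \le i \le p-1\}$ is exactly $\{(r+c)\,i \bmod p : 0 \le i \le p-1\}$.

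Next I would invoke the standard fact that multiplication by an element $\alpha \in \Z_p$ is a bijection of $\Z_p$ onto itself if and only if $\alpha \in \Z_p^*$, i.e., $\alpha \not\equiv 0 \pmod p$. Applying this with $\alpha = r+c$, the image $\{(r+c)i : 0 \le i \le p-1\}$ equals $\{0, 1, \ldots, p-1\}$ (as a set modulo $p$) if and only if $r + c \not\equiv 0 \pmod p$. If instead $r + c \equiv 0 \pmod p$, then every $(r+c)i$ is $0$, so, for instance, $x_0 + c y_0 \equiv x_1 + c y_1 \pmod p$, violating \eqref{eq: linear c}.

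Finally, I would translate the condition $r + c \not\equiv 0 \pmod p$ back into the statement of the lemma: since $r \in \Z_p^*$, this is equivalent to $c \not\equiv -r \pmod p$, i.e., $c \ne p - r$ when $c$ is viewed in $\{0, 1, \ldots, p-1\}$. Combining both directions yields the claimed equivalence.

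There is no real obstacle here; the only thing to keep in mind is to work inside the additive group $\Z_p$ (where the compatibility condition lives) and to state the exceptional value as $p - r$ rather than $-r$ to match the lemma's formulation.
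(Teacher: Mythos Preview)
Your proof is correct and follows essentially the same approach as the paper: both compute $x_i + c y_i \equiv (r+c)i \pmod p$ and observe that this yields a complete residue system modulo $p$ precisely when $r+c \in \Z_p^*$, i.e., when $c \neq p-r$. The only cosmetic difference is that the paper handles the two directions separately, while you package them into the single biconditional ``multiplication by $\alpha$ is a bijection of $\Z_p$ iff $\alpha \in \Z_p^*$''.
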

\begin{proof}
	We first observe that 
	$$
	ri+(p-r)i\equiv 0 \mod p, 
	$$
	for all $0\le i\le p-1$. Thus the value $(p-r)$ is not compatible with $(ri,i)_{0\le i\le p-1}$.
	
	On the other hand, for $c \in \Z_p \setminus \{ p-r\}$, we have $(r+c)\in \Z_p^*$ and 
	$$
	ri+ci\equiv (r+c)i \mod p.
	$$ 
	It follows that
	$$
	\{(r+c)i \}_{0\le i\le p-1}\equiv\{0,1,\dots, p-1 \} \mod p.
	$$
	Thus, for any $c\in \Z_p \setminus \{ p-r\}$, it is compatible with $(ri,i)_{0\le i\le p-1}$.
\end{proof}

\section{Properties of spectral sets and tiles}\label{sec:Properties of spectral sets and tiles}

In this section, we show some properties of spectral sets and tiles in $\Zptp$. These properties not only have their own interest but also are useful to the proof of Theorem \ref{main thm}. The main results in this section are summarized as follows. Let $A\subset \Zptp$ with $1<\sharp A<p^3$ (we don't consider the trivial situation where $\sharp A=1$ or $p^3$). Actually, we have already proved some condition on the cardinality of tiles and spectral sets in previous sections:  if $A$ is a tile in $\Zptp$, then by Lemma \ref{lem:tiling equivalent}, $\sharp A$ divides $\sharp (\Zptp)=p^3$, that is, $\sharp A$ has to be $1, p, p^2$ or $p^3$; if $A$ is a spectral set in $\Zptp$, then by Lemma \ref{lem:hyperplane}, $\sharp A$ is divided by $p$. In this section, we first show that if $p^2<\sharp A<p^3$, then the set $A$ has no chance to be a spectral set (Lemma \ref{lem:spectral set larger than p^2}). We then prove that if $\sharp A=p$ and $\ZZ_A\not=\emptyset$, then $A$ is a tile and meanwhile a spectral set (Proposition \ref{prop:sharp A=p}). In the rest of this section, we deal with some special cases: Lemma \ref{lem:(p,0),(1,c)} and Lemma \ref{lem:(p,0),(1,c) tile and spectral} tells us that if some special elements are contained in $\ZZ_A$, then $\sharp A=p^2$ and meanwhile $A$ is a tile and a spectral set; Lemma \ref{lem:subgroup} shows that if a tile (resp. a spectral set) is in a proper subgroup of $\Zptp$, then it is a spectral set (resp. a tile).  Thus for tiles, we only need to focus on the case where $\sharp A=p^2$; for spectral sets, we only need to consider the case where $p<\sharp A\le p^2$. We leave these cases to the next two sections.

For $A\subset \Zptp$, we are concerned with the difference set $(A-A)$ in the following lemma. We show that if $\sharp A$ is large (i.e. $\sharp A>p^2$), then each non-trivial direction is contained in $(A-A)$.
\begin{lem}\label{lem:vanishing all direction}
	Let $A\subset \Zptp$. Suppose $\sharp A>p^2$. Then for all $\vd\in \Zptp\setminus\{0\}$, we have $(A-A)\cap \vd (\Zptp)^*\not=\emptyset$.
\end{lem}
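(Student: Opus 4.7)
The plan is to reinterpret $\vd(\Zptp)^*$ as the set of generators of the cyclic subgroup $\langle\vd\rangle$ and then to finish by a pigeonhole count on cosets. I will write $G_\vd:=\langle\vd\rangle$ and $H_\vd:=\langle p\vd\rangle$. Since $G_\vd$ is a cyclic $p$-group, $H_\vd$ is its unique maximal proper subgroup, and $[G_\vd:H_\vd]=p$ whether $\vd$ has order $p$ or $p^2$. A direct check against the five orbit representatives listed in Remark~\ref{rem:type of zeros} shows that $\vd(\Zptp)^*=\{r\vd:r\in\Z_{p^2}^*\}$ is exactly the set of generators of $\langle\vd\rangle$, i.e.\ $\vd(\Zptp)^*=G_\vd\setminus H_\vd$; the underlying observation is that $r\vd$ depends only on the class of $r$ modulo the order of $\vd$, and the reduction $\Z_{p^2}^*\twoheadrightarrow(\Z/\mathrm{ord}(\vd)\Z)^*$ is surjective.

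I then argue by contrapositive. Assume $(A-A)\cap\vd(\Zptp)^*=\emptyset$; equivalently, $(A-A)\cap G_\vd\subseteq H_\vd$. If two points $a,a'\in A$ lie in a common coset of $G_\vd$, their difference $a-a'$ lies in $G_\vd\cap(A-A)\subseteq H_\vd$, forcing $a$ and $a'$ into a common coset of $H_\vd$. Consequently $A$ meets each coset of $G_\vd$ in at most $|H_\vd|$ points, so summing over the $[\Zptp:G_\vd]$ cosets of $G_\vd$ gives
\[
\sharp A\;\le\;[\Zptp:G_\vd]\cdot|H_\vd|\;=\;\frac{|\Zptp|}{[G_\vd:H_\vd]}\;=\;\frac{p^3}{p}\;=\;p^2,
\]
contradicting the hypothesis $\sharp A>p^2$.

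The only potentially fussy step is the orbit-by-orbit identification of $\vd(\Zptp)^*$ with the generators of $\langle\vd\rangle$, but in each of the five cases of Remark~\ref{rem:type of zeros} it is a one-line verification. Once that is in place, the argument is essentially a clean pigeonhole count relying on the fact that inside a cyclic $p$-subgroup of $\Zptp$ the maximal proper subgroup always has index exactly $p$.
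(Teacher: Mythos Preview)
Your proof is correct. The paper's argument is also a pigeonhole count, but carried out concretely: for each $\vd$ it chooses (case by case, according to which digit of $\vd$ is a unit) two auxiliary vectors $\textbf{v},\textbf{w}$ so that every element of $\Zptp$ is uniquely a $\Z_p$-combination $a_1\vd+a_2\textbf{v}+a_3\textbf{w}$, and then pigeonholes on the pair $(a_2,a_3)$ to produce two elements of $A$ differing by a nonzero $\Z_p$-multiple of $\vd$. Your route is more structural: you recognise $\vd(\Zptp)^*$ as the set of generators of the cyclic group $\langle\vd\rangle$ and bound $\sharp A$ by counting over cosets of $G_\vd$ and its unique maximal subgroup $H_\vd$. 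This removes the need to pick explicit complementary directions and handles all $\vd$ uniformly; the orbit identification you flag as ``fussy'' in fact needs no case split either, since it follows from the surjectivity of $\Z_{p^2}^*\to(\Z/\mathrm{ord}(\vd)\Z)^*$ that you already note. The trade-off is that the paper's version hands you an explicit witness in $(A-A)\cap\vd(\Zptp)^*$ read off from the $\Z_p$-coordinates, whereas your argument is nonconstructive but carries over verbatim to $\Z_{p^n}\times\Z_{p^m}$ (and indeed to any finite abelian $p$-group) without modification.
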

\begin{proof}
	Let $\vd=(c_1+c_2p,c_3)\in (\Zptp)\setminus\{0\}$ where $c_i\in \Z_p$ and at least one of $c_i$ is non-zero for $i=1,2,3$. 
	We first consider the case where $c_1\not=0$. Let 
	$$
	\textbf{v}=(p,0)~\text{and}~\textbf{w}=(0,1).
	$$
	It is not hard to see that every element in $\Zptp$ is a $\Z_p$-combination of $\vd, \textbf{v}$ and $\textbf{w}$, that is, for any $\textbf{x}\in \Zptp$, there exists $a_1, a_2$ and $a_3$ in $\Z_p$ such that
	$$
	\textbf{x}=a_1\vd+a_2\textbf{v}+a_3\textbf{w}.
	$$ 
	Since $\sharp A>p^2$, by pigeonhole principle, there exist $\textbf{x}$ and $\textbf{x}'$ in $\Zptp$ such that
	$$
	\textbf{x}=a_1\vd+a_2\textbf{v}+a_3\textbf{w}~\text{and}~\textbf{x}'=a_1'\vd+a_2\textbf{v}+a_3\textbf{w}
	$$
	with $a_1, a_1', a_2, a_3\in \Z_p$ and $a_1\not=a_1'$. Thus we obtain that $$\textbf{x}-\textbf{x}'=(a_1-a_1')\vd \in \vd (\Zptp)^*.$$
	The proof of the case where $c_2\not=0$ or $c_3\not=0$ is similar. Thus we complete the proof.
\end{proof}

The following lemma shows that for a set $A\subset \Zptp$, if $p^2<\sharp A<p^3$, then $A$ is not a spectral set in $\Zptp$.
\begin{lem}\label{lem:spectral set larger than p^2}
		Let $A\subset \Zptp$ be a spectral set. Suppose $\sharp A>p^2$. Then $A=\Zptp$.
\end{lem}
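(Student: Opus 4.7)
The plan is to combine the two main results already established: the spectrum $B$ will be big enough to invoke Lemma \ref{lem:vanishing all direction}, and then the $\Z_{p^2}^*$-invariance of $\mathcal{Z}_A$ from Lemma \ref{lem:hyperplane} will propagate the vanishing of $\hat{1}_A$ to every nonzero direction, forcing $A$ to be the whole group.

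First, since $A$ is spectral, Proposition \ref{Equivalence of spectral set} produces a spectrum $B\subset\Zptp$ with $\sharp B=\sharp A>p^2$ and $(B-B)\setminus\{0\}\subset\mathcal{Z}_A$. The key point is that the hypothesis $\sharp A>p^2$ transfers to $B$, so Lemma \ref{lem:vanishing all direction} applies to $B$: for every $\vd\in\Zptp\setminus\{0\}$, there exists $r\in\Z_{p^2}^*$ such that $r\vd\in(B-B)\setminus\{0\}$, and consequently $\hat{1}_A(r\vd)=0$.

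Next I would feed this into the equivalence $(1)\Leftrightarrow(2)$ of Lemma \ref{lem:hyperplane}. Applying it to the direction $r\vd$ (which we just showed lies in $\mathcal{Z}_A$), we get $\hat{1}_A(s(r\vd))=0$ for every $s\in\Z_{p^2}^*$; choosing $s=r^{-1}$ yields $\hat{1}_A(\vd)=0$. Since $\vd$ was arbitrary in $\Zptp\setminus\{0\}$, we conclude
\[
\mathcal{Z}_A=\Zptp\setminus\{0\}.
\]

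Finally, a Fourier inversion finishes the argument: if the Fourier transform of $1_A$ vanishes at every nonzero character, then $1_A$ is the constant function $\sharp A/p^3$, which, taking values in $\{0,1\}$ and being nonzero (as $\sharp A>p^2>0$), forces $1_A\equiv 1$ and hence $A=\Zptp$. The only subtlety worth flagging is making sure the notation $\vd(\Zptp)^*$ in Lemma \ref{lem:vanishing all direction} is interpreted as the $\Z_{p^2}^*$-orbit of $\vd$ under the scalar action of Section \ref{sec:Equi-distribution on planes}, so that it meshes exactly with the $\Z_{p^2}^*$-invariance given by Lemma \ref{lem:hyperplane}; once this compatibility is noted, the argument is essentially immediate and there is no real obstacle.
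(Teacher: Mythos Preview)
Your proof is correct and follows essentially the same approach as the paper: use Proposition \ref{Equivalence of spectral set} to transfer the cardinality bound to the spectrum $B$, apply Lemma \ref{lem:vanishing all direction} to $B$ together with the $\Z_{p^2}^*$-invariance from Lemma \ref{lem:hyperplane} to obtain $\mathcal{Z}_A=\Zptp\setminus\{\mathbf{0}\}$, and finish by Fourier inversion. Your remark about interpreting $\vd(\Zptp)^*$ as the $\Z_{p^2}^*$-orbit is well taken and matches the paper's intended meaning.
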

\begin{proof}
	Let $B$ be a spectrum of $A$. It follows that $\sharp B=\sharp A>p^2$ and 
	$$
	(B-B)\setminus \{ 0\} \subset \ZZ_A.
	$$
	By Lemma \ref{lem:vanishing all direction} and Lemma \ref{lem:hyperplane} $(2)$, we have 
	$$
	\ZZ_A=(\Zptp)\setminus \{\textbf{0}\},
	$$
	which means that 
	$$
\widehat{1}_A(\textbf{x})=0, ~\text{for all}~\textbf{x}\not=0.
	$$
	By elementary Fourier analysis, we have
	$$
	{1}_A(\textbf{x})=\sum_{\vd\in \Zptp} \xi_n^{\langle \textbf{x},\textbf{d} \rangle} \widehat{1}_A(\textbf{x}),
	$$
	where $\xi_n$ is a primitive $n$-th root of unity. It follows that
	$$
	{1}_A(\textbf{0})=\widehat{1}_A(\textbf{0})=\frac{\sharp A}{p^3}.  
	$$
	Since ${1}_A(x)$ is an indicator of a set, implying that ${1}_A(x)$ takes value in $\{0,1\}$, we conclude that $\sharp A=p^3$. Thus, we obtain that the set $A$ has to be $\Zptp$.
\end{proof}

Generally, Lemma \ref{lem:vanishing all direction} and Lemma \ref{lem:spectral set larger than p^2} are also valid for all $2$-dimensional finite abelian groups $\Z_{p^n}\times \Z_{p^m}$  ($n,m\ge 0$):

\vspace{8pt}
{\em Let $A\subset \Z_{p^n}\times \Z_{p^m}$. Suppose $\sharp A>p^{n+m-1}$. Then for all $\vd\in (\Zpp)\setminus\{\textbf{0}\}$, we have $(A-A)\cap \vd (\Z_{p^n}\times \Z_{p^m})^*\not=\emptyset$. Moreover, if $A$ is a spectral set, then $A=\Z_{p^n}\times \Z_{p^m}$.}
\vspace{8pt}

The proof is similar with the proof of Lemma \ref{lem:vanishing all direction} and Lemma \ref{lem:spectral set larger than p^2}. We leave it to the readers to work out the details.

In what follows, we write simply $H(\textbf{d},t)=H_{2,1}(\textbf{d},t)$ for every $t\in \Z_{p^2}$ and $\vd\in \Zptp$. Moreover, for any subset $B\subset \Zptp$, we write 
$$
h_{B}(\vd, t)=\sharp (B\cap H(\vd, t)).
$$
A simple fact which we will use several times later is that the statement $H((p,c),t)\cap B=\emptyset$ for all $c\in \Z_p$ and all $t\in \Z_{p^2}$ with $p\not|t$ implies that $h_B((p,c),t)=0$ for all $c\in \Z_p$ and all $t\in \Z_{p^2}$ with $p\not|t$.

Now we are concerned with the case where $\sharp A=p$.
\begin{prop}\label{prop:sharp A=p}
	Let $A\subset \Zptp$ with $\sharp A=p$. Suppose that $\ZZ_A\not=\emptyset$. Then $A$ is a tile and also a spectral set.
\end{prop}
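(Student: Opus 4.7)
The plan is to combine the classification of $\sim$-orbits in $\Zptp$ from Remark~\ref{rem:type of zeros} with the equi-distribution Lemma~\ref{lem:hyperplane}. By Lemma~\ref{lem:hyperplane}(2), $\ZZ_A$ is a union of $\sim$-orbits, so any fixed $\vd\in \ZZ_A$ may be replaced by a convenient representative of its $\sim$-class, and I may assume $\vd$ is one of $(0,1)$, $(p,0)$, $(p,c)$, $(1,0)$, or $(1,c)$ with $c\in \Z_p^*$. In each case, Lemma~\ref{lem:hyperplane}(3) pins down the distribution of the $p$ points of $A$ on the planes $H(\vd,\cdot)$, from which I would exhibit an explicit tile complement and a spectrum.

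The three cases $\vd\in \{(0,1), (p,0), (p,c)\}$ are handled uniformly. Here $K_\vd := H(\vd, 0)$ is a subgroup of $\Zptp$ of order $p^2$---concretely $\Z_{p^2}\times \{0\}$, $p\Z_{p^2}\times \Z_p$, and $\{(x_1,x_2) : x_1 + cx_2 \equiv 0 \bmod p\}$, respectively---and the nonempty planes $H(\vd, t)$ are exactly its $p$ cosets. Equi-distribution together with $\sharp A = p$ forces $A$ to be a complete set of coset representatives for $\Zptp/K_\vd$, so $A\oplus K_\vd = \Zptp$ and $A$ is a tile by Lemma~\ref{lem:tiling equivalent}. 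For the spectrum I would take $B = K_\vd^\perp$, the annihilator subgroup; it has order $p$ and, as a short inspection shows, coincides with $\{r\vd : r\in \Z_{p^2}^*\}\cup \{\mathbf{0}\}$, so $B\setminus \{\mathbf{0}\}\subset \ZZ_A$ by Lemma~\ref{lem:hyperplane}(2). Proposition~\ref{Equivalence of spectral set} then identifies $(A,B)$ as a spectral pair.

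In the remaining cases $\vd = (1,0)$ or $\vd = (1,c)$ with $c\ne 0$, the planes $H(\vd, t)$ have size $p$; equi-distribution mod $p$ and $\sharp A = p$ force $A$ (after translating its first coordinate by a constant) to lie inside the subgroup $L = p\Z_{p^2}\times \Z_p$, with exactly one point on each of the $p$ planes $H(\vd, pk)$ for $k\in \Z_p$. When $\vd = (1,0)$, this pins $A = \{(pk, y_k) : k\in \Z_p\}$, and a direct computation shows $T = \{0,1,\dots,p-1\}\times \Z_p$ is a tile complement while $B = \{0,1,\dots,p-1\}\times \{0\}$ is a spectrum; both verifications reduce to $\sum_{k=0}^{p-1} e^{-2\pi i k\ell/p} = 0$ for $p\nmid \ell$. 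For $\vd = (1,c)$ with $c\ne 0$, I would reduce to the preceding case via the group automorphism $\phi(x_1,x_2) = (x_1 + pcx_2, x_2)$ of $\Zptp$: a direct calculation gives $\widehat{1}_{\phi(A)}(d_1, d_2) = \widehat{1}_A(d_1, cd_1 + d_2 \bmod p)$, so $(1, c)\in \ZZ_A$ iff $(1, 0)\in \ZZ_{\phi(A)}$. The tile and spectrum produced for $\phi(A)$ in the $(1,0)$ case then transport back to $A$ through $\phi^{-1}$ (for the tile) and the adjoint $\phi^T$ (for the spectrum).

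The step requiring the most care is the case $\vd = (1,c)$: writing $A = \{(ps_j, y_j) : j\in \Z_p\}$ with the constraint $s_j + cy_j \equiv j \pmod p$, the naive candidate $T = \{0,\dots,p-1\}\times \Z_p$ fails in general to be a tile complement, since the map $j\mapsto s_j$ need not be a bijection on $\Z_p$. The automorphism reduction resolves this uniformly, but relies on the compatibility of the Fourier transform with automorphisms of $\Zptp$, which is the main technical point and must be verified with care.
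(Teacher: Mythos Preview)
Your argument is correct, and the spectral-set half essentially matches the paper: the paper's uniform spectrum $B=\{r\vd:0\le r\le p-1\}$ is exactly the annihilator you describe in the first three cases and the set $\{(j,cj):0\le j\le p-1\}$ you obtain via $\phi^T$ in the $(1,c)$ case.

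The tiling half, however, follows a genuinely different route. For the directions $(0,1)$, $(p,0)$, $(p,c)$ the paper writes down explicit complements $B_1,B_3,B_4$ and verifies $\ZZ_A\cup\ZZ_{B_i}=(\Zptp)\setminus\{\mathbf 0\}$ via Lemma~\ref{lem:hyperplane}, whereas you observe directly that $K_\vd=H(\vd,0)$ is an index-$p$ subgroup and that equi-distribution forces $A$ to be a transversal, so $A\oplus K_\vd=\Zptp$. This is more conceptual and bypasses the zero-set bookkeeping. For $\vd=(1,c)$ (including $c=0$) the paper determines the shape of $A$ and then reduces to the known Fuglede conjecture on $\Z_p\times\Z_p$ from \cite{imp}, lifting a tiling complement from that quotient; you instead handle $(1,0)$ by an explicit complement $T=\{0,\dots,p-1\}\times\Z_p$ and reduce $(1,c)$ to $(1,0)$ via the automorphism $\phi(x_1,x_2)=(x_1+pcx_2,x_2)$. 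Your approach is self-contained (it does not invoke \cite{imp}) and the automorphism trick is reusable, while the paper's reduction to $\Z_p\times\Z_p$ ties this proposition into the quotient picture used elsewhere in the paper. Either method is perfectly acceptable; the only step you flagged as delicate --- the Fourier compatibility $\widehat{1}_{\phi(A)}(d_1,d_2)=\widehat{1}_A(d_1,cd_1+d_2)$ --- is the routine identity $\chi_{\mathbf d}\circ\phi=\chi_{\phi^T\mathbf d}$ and poses no difficulty.
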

\begin{proof}
	We first prove that $A$ is a spectral set by constructing its spectrum. Suppose that $\vd\in \ZZ_A$. Define 
	$$
	B:=\{r\vd: 0\le r\le p-1 \}.
	$$
	Since $\vd\not=\textbf{0}$, we have $\sharp B=p$.
	It is easy to see that
	$$
	B-B=\vd \Z_p^* \cup \{0\} \subset \vd \Z_{p^2}^* \cup \{0\}
	$$
	It follows that 
	$$
	(B-B)\setminus \{0\} \subset \ZZ_A.
 	$$
 By Lemma \ref{Equivalence of spectral set}, we obtain that $B$ is a spectrum of $A$.

	It remains to prove that $A$ is a tile. Denote by 
	\begin{equation}\label{eq:form A}
	A=\{(x_i+py_i, z_i): x_i,y_i,z_i\in \Z_p, 0\le i\le p-1 \}.
	\end{equation}
	Due to Remark \ref{rem:type of zeros}, we decompose the proof in the following four cases. \\
	
\textbf{Case 1}: $(0,1)\in \ZZ_A$.	We define 
$$
B_1=\{(x,0): x\in \Z_{p^2} \}.
$$
We will prove that $B_1$ is a tiling complement of $A$. It is easy to see that $\sharp B_1=p^2$.
A simple computation shows that for all $c\in \Z_p$ and for all $t\in \Z_{p^2}$, we have
\begin{equation}\label{eq:B_1(0)}
h_{B_1}((1,c), t+pj)=1~\text{for all}~0\le j\le p-1.
\end{equation}
By Lemma \ref{lem:hyperplane}, we have 
\begin{equation}\label{eq:B_1(1)}
(1,c)\in \ZZ_{B_1},~\forall~c\in \Z_p.
\end{equation}
On the other hand, by similar computation with $(\ref{eq:B_1(0)})$, we obtain that for all $c\in \Z_p$ and $z\in \Z_{p^2}$,
$$
h_{B_1}((p,c), t)=
\begin{cases}
p,~&\text{if}~p\mid t;\\
0,&\text{otherwise}.
\end{cases}
$$
By Lemma \ref{lem:hyperplane}, we have 
\begin{equation}\label{eq:B_1(2)}
(p,c)\in \ZZ_{B_1},~\forall~c\in \Z_p.
\end{equation}
Combining (\ref{eq:B_1(1)}), (\ref{eq:B_1(2)}) and Remark \ref{rem:type of zeros}, we get
$$
\ZZ_A \cup \ZZ_{B_1}=(\Zptp)\setminus\{\textbf{0}\}.
$$
By Lemma \ref{lem:tiling equivalent}, we conclude that $B_1$ is a tiling complement of $A$.

\vspace{8pt}	
\textbf{Case 2}: $\exists c\in \Z_p$ such that $(1,c)\in \ZZ_A$. By Lemma \ref{lem:hyperplane} and the fact that $\sharp A=p$, we obtain that there exists $\bar{x}\in \Z_p$ such that 
\begin{equation}\label{eq: (1,c)}
h_A((1,c),\bar{x}+pj)=1~\text{for all}~0\le j\le p-1.
\end{equation}
Using (\ref{eq:form A}), we rewrite (\ref{eq: (1,c)}) as follows (taking a permutation of the index set $\{i\}_{0\le i\le p-1}$ if necessary):
$$
x_i+py_i+cpz_i\equiv \bar{x}+pi \mod p^2,~\forall~0\le i\le p-1.
$$
It follows that 
$$
x_i=\bar{x}~\text{and}~ y_i+cz_i\equiv i\mod p, ~\forall~0\le i\le p-1.
$$
Thus $A$ has to be the form
$$
\{(\bar{x}+py_i,z_i): y_i,z_i\in \Z_p, y_i+cz_i\equiv i\mod p, 0\le i\le p-1  \}.
$$
Let 
$$
\widetilde{A}=\{(y_i, z_i): 0\le i\le p-1 \} \subset \Z_p\times \Z_p.
$$
Following the results in \cite{imp}, the set $\widetilde{A}$ is a tile. Denoting by $\widetilde{B}_2$ a tiling complement of $\widetilde{A}$, we define
$$
B_2=\{(x+py,z): x\in \Z_p, (y,z)\in \widetilde{B}_2 \}.
$$ 
It follows that $B_2$ is a tiling complement of $A$. In fact, for any $(x+py,z)\in \Zptp$, there exists unique decomposition 
$$
(x+py,z)=(\bar{x}+py',z')+((x-\bar{x})+py'',z''),
$$
where $(\bar{x}+py',z')\in A$, $((x-\bar{x})+py'',z'')\in B_2$ and $(y,z)=(y',z')+(y'',z'')\in \Z_p\times \Z_p$ in the unique way according to the tiling pair $(\widetilde{A}, \widetilde{B})$.

\vspace{8pt}
\textbf{Case 3}: $\exists c\in \Z_p^*$ such that $(p,c)\in \ZZ_A$.
We define 
$$
B_3=\{(x+py,(p-c)x): x,y\in \Z_p \}.
$$
It is easy to see that $\sharp B_3=p^2$. The statement that the set $B_3$ is a tiling complement of $A$ is a direct consequence by Lemma \ref{lem:tiling equivalent} and the following claim. 

\vspace{8pt}
\textbf{Claim:} $\ZZ_{B_3}=(\Zptp)\setminus\{(0,0), (p,c) \}$.
\begin{proof}
	According to Remark \ref{rem:type of zeros}, we will prove the following three kinds of zeros are belonging to $\ZZ_{B_3}$.
	
	\vspace{6pt}
	\textbf{1.} $(0,1)$. The result is followed directly from Lemma \ref{lem:hyperplane} and the fact that for $z\in \Z_{p^2}$,
	$$
	h_{B_3}((0,1), t)=
	\begin{cases}
	p,~&\text{if}~p\mid t;\\
	0,&\text{otherwise}.
	\end{cases}
	$$
	Thus we get $(0,1)\in \ZZ_{B_3}$.

	\vspace{6pt}
	\textbf{2.} $(1,d)$ for all $d\in \Z_p$. A simple calculation shows that for all $d\in \Z_p$ and for all $t\in \Z_{p^2}$, we have
	$$
	h_{B_3}((1,d), t)=1.
	$$
	By Lemma \ref{lem:hyperplane}, we obtain that $(1,d)\in \ZZ_{B_3}$ for all $d\in \Z_p$.
	
	\vspace{6pt}
	\textbf{3.} $(p,d)$ for all $d\in \Z_p\setminus\{c\}$. By Lemma \ref{lem:compatible p-1}, we obtain that for all $d\in \Z_p\setminus\{c\}$ and for $t\in \Z_{p^2}$,
	$$
	h_{B_3}((p,d), t)=
	\begin{cases}
	p,~&\text{if}~p\mid t;\\
	0,&\text{otherwise}.
	\end{cases}
	$$
	By Lemma \ref{lem:hyperplane}, we have $(p,d)\in \ZZ_{B_3}$ for all $d\in \Z_p\setminus\{c\}$.
\end{proof}

\vspace{8pt}	
\textbf{Case 4}: $(p,0)\in \ZZ_A$. Define 
$$
B_4=\{(py,z): y,z\in \Z_p \}.
$$
We will prove that $B_4$ is a tiling complement of $A$. It is easy to see that $\sharp B_4=p^2$.
A simple computation shows that for all $c\in \Z_p$, we have
$$
h_{B_4}((1,c), t)=
\begin{cases}
p,~&\text{if}~p\mid t;\\
0,&\text{otherwise}.
\end{cases}
$$
By Lemma \ref{lem:hyperplane}, we have 
\begin{equation}\label{eq:B_4(1)}
(1,c)\in \ZZ_{B_4},~\forall~c\in \Z_p.
\end{equation}
On the other hand, we obtain that for all $c\in \Z_p^*$,
$$
h_{B_4}((p,c), t)=
\begin{cases}
p,~&\text{if}~p\mid t;\\
0,&\text{otherwise}.
\end{cases}
$$
Moreover, we have
$$
h_{B_4}((0,1), t)=
\begin{cases}
p,~&\text{if}~p\mid t;\\
0,&\text{otherwise}.
\end{cases}
$$
By Lemma \ref{lem:hyperplane}, we have 
\begin{equation}\label{eq:B_4(2)}
(p,c), (0,1)\in \ZZ_{B_4},~\forall~c\in \Z_p^*.
\end{equation}
Combining (\ref{eq:B_4(1)}), (\ref{eq:B_4(2)}) and Remark \ref{rem:type of zeros}, we get
$$
\ZZ_A \cup \ZZ_{B_1}=(\Zptp)\setminus\{\textbf{0}\}.
$$
By Lemma \ref{lem:tiling equivalent}, we conclude that $B_4$ is a tiling complement of $A$.
	
\vspace{8pt}

To sum up, we conclude that the set $A$ is a tile in $\Zptp$.
	
\end{proof}

According to Remark \ref{rem:type of zeros}, we only need to study some particular zeros. The following lemma shows that if $\ZZ_B$ contains two special kinds of zeros, then $\sharp B=p^2$ and $B$ has a certain form. 

\begin{lem}\label{lem:(p,0),(1,c)}
	Let $B$ be a subset of $\Zptp$. Suppose that $\sharp B\le p^2$ and $(p,0),(1,c)\in \ZZ_B$ for some $c\in \Z_p$. Then $\sharp B=p^2$ and $B$ has the form 
	\begin{equation}\label{eq:(p,0),(1,c)}
	\{(i+py_{ij}, z_{ij}): y_{ij}+cz_{ij}\equiv j \mod p, 0\le i,j\le p-1 \}.
	\end{equation}
\end{lem}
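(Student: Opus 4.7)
The plan is to exploit the hyperplane equidistribution of Lemma~\ref{lem:hyperplane} for the two distinguished zeros $(p,0)$ and $(1,c)$ of $\widehat{1}_B$. The underlying geometric picture is that the hyperplanes $H((1,c),t)$ for $t\in\Z_{p^2}$ partition $\Zptp$ into $p^2$ fibers of size $p$, and that each vertical strip $H((p,0),pk)$ decomposes as the disjoint union of the $p$ fibers $H((1,c),k+pj)$ as $j$ varies in $\Z_p$.

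First I would apply Lemma~\ref{lem:hyperplane}(3) to $\vd=(1,c)$: since $n=2$ the condition $p^{n-1}\mid t-t'$ becomes $p\mid t-t'$, forcing $h_B((1,c),i+pj)$ to depend only on $i$, giving a value $\beta_i$. Next I would apply the same lemma to $\vd=(p,0)$: a direct calculation shows that $H((p,0),t)=\emptyset$ whenever $p\nmid t$, while $H((p,0),pk)=\{(x_1,x_2):x_1\equiv k\mod p\}$; the lemma then forces $h_B((p,0),pk)$ to be a constant $\alpha$ as $k$ varies in $\Z_p$. The fiber/strip decomposition gives $\alpha=\sum_j h_B((1,c),k+pj)=p\beta_k$, so all $\beta_i$ agree with a common value $\beta$, and therefore $\sharp B=p^2\beta$.

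Combined with the hypothesis $\sharp B\le p^2$ this forces $\beta\le 1$; since $B\neq\emptyset$ is implicit in the conclusion, $\beta=1$, whence $\sharp B=p^2$ and each fiber $H((1,c),i+pj)$ meets $B$ in exactly one point. Writing that point as $(i+py_{ij},z_{ij})$ with $y_{ij},z_{ij}\in\Z_p$, the fiber defining equation $(i+py_{ij})+pcz_{ij}\equiv i+pj\mod p^2$ simplifies to $y_{ij}+cz_{ij}\equiv j\mod p$, which is exactly the parametrization (\ref{eq:(p,0),(1,c)}). The only nonroutine step is verifying the strip/fiber decomposition linking the two zeros; once that is in place, the remainder is bookkeeping through Lemma~\ref{lem:hyperplane}.
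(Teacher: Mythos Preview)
Your argument is correct and follows essentially the same route as the paper's proof: both use Lemma~\ref{lem:hyperplane}(3) at $(p,0)$ and at $(1,c)$, combine them via the containment $H((1,c),i+pj)\subset H((p,0),pi)$ (your ``strip/fiber decomposition''), deduce $p^2\mid\sharp B$, and then read off the parametrization from the single point in each fiber $H((1,c),i+pj)\cap B$. The only cosmetic difference is that you name the intermediate constants $\alpha,\beta_i,\beta$, whereas the paper writes the same quantities directly as $\tfrac1p\sharp B$ and $h_B((1,c),i+pj)$; both arguments also tacitly use $B\neq\emptyset$ to pass from $p^2\mid\sharp B\le p^2$ to $\sharp B=p^2$.
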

\begin{proof}
	By the fact that $(p,0)\in \ZZ_B$ and Lemma \ref{lem:hyperplane}, we have
	$$
	h_B((p,0), pj)=\frac{1}{p}\sharp B~\text{for all}~0\le j\le p-1.
	$$ 
	We observe that 
	\begin{equation}\label{eq:(p,0)(1,c) 1}
	H((1,c), i+pj)\subset H((p,0),pi),~\text{for all} ~0\le i,j\le p-1.
	\end{equation}
	In fact, supposing $(x+py,z)\in H((1,c), i+pj)$, we obtain that
	\begin{equation}\label{eq:(p,0)(1,c) 3}
	x+py+pcz\equiv i+pj \mod p^2,
	\end{equation}
	which implies
	$$
	x\equiv i \mod p,
	$$
	that is to say, $(x+py,z)\in H((p,0),pi)$. By the fact that $(1,c)\in \ZZ_B$ and Lemma \ref{lem:hyperplane}, we have 
	\begin{equation}\label{eq:(p,0)(1,c) 2}
	h_B((1,c), i)=h_B((1,c), i+p)=\cdots=h_B((1,c), i+p(p-1)),
	\end{equation}
	for all $0\le i\le p-1$.
	Combining (\ref{eq:(p,0)(1,c) 1}) and (\ref{eq:(p,0)(1,c) 2}), we obtain that $\frac{1}{p}\sharp B$ is divided by $p$ and 
	$$
	h_B((1,c), i+pj)=\frac{1}{p} h_B((p,0),pi),~\text{for all} ~0\le i,j\le p-1.
	$$
	Since $\sharp B\le p^2$, we have $\sharp B=p^2$ and $h_B((1,c), i+pj)=1$ for all $0\le i,j\le p-1$. By (\ref{eq:(p,0)(1,c) 3}), the point $(x+py,z)$ lies in the plane $ H((1,c), i+pj)$ if and only if 
	$$
	x\equiv i \mod p~\text{and}~y+cz\equiv j \mod p.
	$$
	Denoting by $(i+py_{ij}, z_{ij})$ the only point lying in $H((1,c), i+pj)\cap B$,  we conclude that the set $B$ has to have the form (\ref{eq:(p,0),(1,c)}).
\end{proof}

We show that the set having the form  in Lemma \ref{lem:(p,0),(1,c)} is exactly a tile and also a spectral set.
\begin{lem}\label{lem:(p,0),(1,c) tile and spectral}
	Let $B$ be a subset of $\Zptp$. Suppose that $B$ has the form (\ref{eq:(p,0),(1,c)}). Then $B$ is a tile and also a spectral set.
\end{lem}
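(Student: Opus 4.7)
The plan is to exhibit an explicit tiling complement and an explicit spectrum for $B$, both built from a single cyclic subgroup $H$ of $\Zptp$ of order $p$ and its annihilator $H^\perp$. The subgroup I would choose is
\[
H=\bigl\{(-cpk\bmod p^2,\; k):\; k\in \Z_p\bigr\},
\]
which is generated by $(-cp,1)$ (of order $p$), and its annihilator
\[
H^\perp=\bigl\{(d_1,\; cd_1\bmod p):\; d_1\in \Z_{p^2}\bigr\},
\]
which has order $p^2$. The guiding intuition is that the defining relation $y_{ij}+cz_{ij}\equiv j\pmod p$ in (\ref{eq:(p,0),(1,c)}) picks exactly one representative from each coset of $H$, so $B$ should be a coset transversal.

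For the tiling part, I would check directly that $B\oplus H=\Zptp$. Given an arbitrary $(u,v)\in\Zptp$, write $u=i+py$ with $i\in\{0,\dots,p-1\}$ and $y\in \Z_p$, set $j\equiv y+cv\pmod p$, and let $b=(i+py_{ij},z_{ij})$ be the unique element of $B$ indexed by $(i,j)$. A short calculation using $y_{ij}+cz_{ij}\equiv j\pmod p$ yields
\[
(u,v)-b=\bigl(-cpk\bmod p^2,\; k\bigr)\in H,\qquad k:=v-z_{ij}\bmod p,
\]
so every element of $\Zptp$ is expressible as $b+t$ with $b\in B$ and $t\in H$; uniqueness is automatic from $\sharp B\cdot \sharp H=p^2\cdot p=p^3=\sharp\Zptp$. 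Hence $H$ is a tiling complement of $B$ by Lemma \ref{lem:tiling equivalent}.

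For the spectral part, I use Proposition \ref{Equivalence of spectral set}: it suffices to show $\sharp H^\perp=\sharp B$ (both equal $p^2$) and $(H^\perp-H^\perp)\setminus\{0\}\subset \ZZ_B$. Because $H^\perp$ is a subgroup, $H^\perp-H^\perp=H^\perp$, so I only need $H^\perp\setminus\{0\}\subset \ZZ_B$. Any nonzero element of $H^\perp$ has the form $(d_1,cd_1\bmod p)$ with $d_1\in \Z_{p^2}\setminus\{0\}$, and splits into two cases: if $d_1\in \Z_{p^2}^*$, then $(d_1,cd_1\bmod p)=d_1\cdot(1,c)$, which lies in $\ZZ_B$ by Lemma \ref{lem:hyperplane}(2) applied to the hypothesis $(1,c)\in \ZZ_B$; if $d_1=pf$ with $f\in \Z_p^*$, then $(d_1,cd_1\bmod p)=(pf,0)=f\cdot(p,0)$, which lies in $\ZZ_B$ by Lemma \ref{lem:hyperplane}(2) applied to $(p,0)\in \ZZ_B$. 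This finishes both parts. The only real obstacle is spotting the subgroup $H$ in the first place; once $H$ is identified the verifications are routine consequences of the defining relation of $B$ and the scalar-invariance of $\ZZ_B$ provided by Lemma \ref{lem:hyperplane}.
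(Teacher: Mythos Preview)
Your proof is correct. The spectral half is in fact the same as the paper's: your $H^\perp=\{(d_1,cd_1\bmod p):d_1\in\Z_{p^2}\}$ is exactly the paper's spectrum $A=\{(j+pi,cj):0\le i,j\le p-1\}$ (write $d_1=j+pi$), and both arguments come down to $(A-A)\setminus\{0\}\subset (1,c)(\Zptp)^*\cup (p,0)(\Zptp)^*\subset\ZZ_B$.

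The tiling half is genuinely different. The paper reduces to $\Z_p\times\Z_p$: it observes that each column $\{(y_{ij},z_{ij}):0\le j\le p-1\}$ is a tile of $\Z_p\times\Z_p$ by \cite{imp}, picks a tiling complement $T$ there, and lifts it to $C=\{(py,z):(y,z)\in T\}$. You instead exhibit the explicit subgroup $H=\langle(-cp,1)\rangle$ and check $B\oplus H=\Zptp$ directly from the defining relation $y_{ij}+cz_{ij}\equiv j$. Your $H$ is precisely the lift of the specific complement $T=\{(-ck,k):k\in\Z_p\}$, so your argument is a self-contained specialization of the paper's, avoiding the external citation at the cost of not displaying the underlying $\Z_p\times\Z_p$ structure.

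One small remark: you invoke ``the hypothesis $(1,c)\in\ZZ_B$'' and ``$(p,0)\in\ZZ_B$'', but Lemma~\ref{lem:(p,0),(1,c) tile and spectral} only assumes $B$ has the form (\ref{eq:(p,0),(1,c)}). These zero conditions do follow from the form (counting $h_B((1,c),i+pj)=1$ and $h_B((p,0),pi)=p$ via Lemma~\ref{lem:hyperplane}(3)), and the paper makes the same tacit identification, but strictly speaking a one-line justification would be in order.
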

\begin{proof}
	Let 
	$$
	A=\{(j+pi, cj): 0\le i,j\le p-1 \}.
	$$
	It is easy to see that $\sharp A=p^2$. We will prove that $A$ is a spectrum of $B$. In fact, a simple computation shows that
	$$
	(A-A)\setminus\{0\}\subset (1,c) (\Zptp)^*\cup (p,0)(\Zptp)^*.
	$$
	Since $(1,c),(p,0)\in \ZZ_B$, by Lemma \ref{Equivalence of spectral set}, we conclude that $A$ is a spectrum of $B$.

	It remains to show that $B$ is also a tile. Let $\Omega$ be a subset of $\Z_p\times \Z_p$ which has the form
	$$
	\{(y_j,z_j): y_{j}+cz_{j}\equiv j \mod p, 0\le j\le p-1 \}.
	$$
	According to \cite{imp}, we see that $\Omega$ is a tile. Let $T$ be a tiling complement of $\Omega$. Let 
	$$
	C=\{(py,z): (y,z)\in T\}.
	$$
	 We will prove that $C$ is actually a tiling complement of $B$. In fact, for any $(x+py,z)\in \Zptp$, there exists the unique decomposition 
$$
(x+py,z)=(x+py',z')+(py'',z''),
$$
where $(x+py',z')\in B$, $(py'',z'')\in C$ and $(y,z)=(y',z')+(y'',z'')\in \Z_p\times \Z_p$ in the unique way according to the tiling pair $(\Omega, T)$.

\end{proof}

Lemma \ref{lem:(p,0),(1,c)} and Lemma \ref{lem:(p,0),(1,c) tile and spectral} gives us a sufficient condition for tiles (or spectral sets) and is useful to handle the case where $p<\sharp B\le p^2$, which will be shown in Section \ref{sec:Tiles implying Spectral sets} and Section \ref{Spectral sets implying Tiles}.

We finish this section by the following lemma concerning tiles or spectral sets in a proper subgroup of $\Zptp$.
\begin{lem}\label{lem:subgroup}
	Let $G$ be a proper subgroup of $\Zptp$. Let $A\subset G$. Then $A$ is a tile if and only if it is a spectral set.
\end{lem}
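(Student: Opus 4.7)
The plan is to reduce Lemma \ref{lem:subgroup} to known cases of Fuglede's conjecture via two ingredients: a classification of the proper subgroups of $\Zptp$, and the standard fact that the tile and spectral-set properties transfer between an ambient finite abelian group and a subgroup containing $A$.

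\emph{Classification.} Any proper subgroup $G$ of $\Z_{p^2}\times \Z_p$ has order $1$, $p$, or $p^2$, and is therefore isomorphic to one of the following: $\{0\}$; the cyclic group $\Z_p$ (any of the $p+1$ cyclic subgroups of the $p$-torsion); the cyclic group $\Z_{p^2}$ (any of the $p$ cyclic subgroups $\langle (1,k)\rangle$, $k\in\Z_p$); or $\Z_p\times \Z_p$ (the unique non-cyclic subgroup of order $p^2$, namely the $p$-torsion $p\Z_{p^2}\times \Z_p$). Fuglede's spectral set conjecture is already known on each of these smaller groups: trivially on $\{0\}$ and $\Z_p$, by \cite{l2,ffs} on $\Z_{p^2}$, and by \cite{imp} on $\Z_p\times \Z_p$.

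\emph{Transfer.} For $A\subset G\le H$ with $H$ any finite abelian group, I will show $A$ tiles $H$ iff $A$ tiles $G$, and $A$ is spectral in $H$ iff $A$ is spectral in $G$. For tiling: if $A\oplus T_0=G$ with $T_0\subset G$, then for any transversal $R$ of $H/G$ one has $A\oplus (T_0+R)=H$; conversely, if $A\oplus T=H$, then $T\cap G$ is a tiling complement of $A$ in $G$, since for each $g\in G$ the unique $(a,t)\in A\times T$ with $a+t=g$ automatically satisfies $t=g-a\in G$. For spectra, use the surjective restriction map $r\colon \widehat{H}\to \widehat{G}$. A spectrum $B_0\subset \widehat{G}$ of $A$ lifts along any section of $r$ to a spectrum in $\widehat{H}$, since the orthogonality relations on $A\subset G$ depend only on restrictions to $G$. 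Conversely, if $B\subset \widehat{H}$ is a spectrum of $A$, then $r$ is injective on $B$: should $r(\chi_b)=r(\chi_{b'})$ for some $b\neq b'$ in $B$, the characters $\chi_b$ and $\chi_{b'}$ would agree on $G\supset A$, forcing $\sum_{a\in A}\chi_b(a)\overline{\chi_{b'}(a)}=\sharp A\ne 0$ and contradicting orthogonality; hence $r(B)\subset \widehat{G}$ has cardinality $\sharp A$ and is a spectrum of $A$ in $G$.

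Combining the classification with the transfer immediately yields Lemma \ref{lem:subgroup}: any $A\subset G\subsetneq \Zptp$ is a tile (resp.\ spectral set) of $\Zptp$ iff it is a tile (resp.\ spectral set) of $G$, and Fuglede's conjecture holds on $G$. The only mildly subtle point is the spectral transfer; the hypothesis $A\subset G$ is used precisely to ensure that distinct elements of any spectrum $B\subset \widehat{H}$ have distinct restrictions to $G$, which is what makes $r(B)$ a genuine spectrum in $\widehat{G}$.
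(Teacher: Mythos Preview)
Your proof is correct and follows the same approach as the paper: classify the proper subgroups of $\Zptp$ up to isomorphism and invoke the known validity of Fuglede's conjecture on $\Z_p$, $\Z_{p^2}$, and $\Z_p\times\Z_p$. The paper's proof stops there, leaving implicit the transfer step (that being a tile or spectral set of $\Zptp$ is equivalent, for $A\subset G$, to being a tile or spectral set of $G$); you spell this out carefully, which makes your argument strictly more complete than the paper's.
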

\begin{proof}
	It is not hard to check that up to an isomorphism, the proper subgroups of $\Zptp$ are 
	$$
	\Z_p, \Z_p\times \Z_p~\text{and}~\Z_{p^2}.
	$$
	We know that Fuglede's spectral set conjecture holds on these groups (see \cite{l2}, \cite{ffs} and \cite{imp}). Thus we complete the proof.
\end{proof}

\section{Tiles $\Rightarrow$ Spectral sets}\label{sec:Tiles implying Spectral sets}
Let $(A,B)$ be a tiling pair in $\Zptp$. In this section, we will prove that both $A$ and $B$ are spectral sets. 

If $\sharp A=1$ or $\sharp B=1$, then it is easy to see that $A$ and $B$ are spectral sets. Now suppose that $\sharp A>1$ and $\sharp B>1$. Without loss of generality, we may assume that $\sharp A=p$ and $\sharp B=p^2$. Due to Proposition \ref{prop:sharp A=p}, we already know that $A$ is a spectral set. It remains to prove that $B$ is a spectral set. 

We observe that $(1,0)$ and $(p,0)$ can not be in $\ZZ_A$ at the same time. In fact, if $(1,0), (p,0)\in \ZZ_A$, then $p^2$ divides $\sharp A$. This is contradict to the hypothesis $\sharp A=p$. By Lemma \ref{lem:tiling equivalent}, either $(1,0)$ or $(p,0)$ is in $\ZZ_B$. We thus decompose the proof into the following three cases.

\vspace{10pt}
\textbf{Case 1:} $(1,0), (p,0)\in \ZZ_B$. By Lemma \ref{lem:(p,0),(1,c)} and Lemma \ref{lem:(p,0),(1,c) tile and spectral}, we obtain that $B$ is a spectral set.

\vspace{10pt}
\textbf{Case 2:} $(1,0)\in \ZZ_B$ and $(p,0)\notin \ZZ_B$. It follows that $(p,0)\in \ZZ_A$.  By Lemma \ref{lem:(p,0),(1,c)} and the fact that $\sharp A=p$, we have that $(1,a)\notin \ZZ_A$ for all $0\le a\le p-1$. It follows that $(1,a)\in \ZZ_B$ for all $0\le a\le p-1$.

On the other hand, by Lemma \ref{lem:hyperplane} and the fact that $\sharp A=p$, we obtain that 
\begin{equation}\label{eq: (p,0)}
h_A((p,0),0)=h_A((p,0),p)=\dots=h_A((p,0),(p-1)p)=1.
\end{equation}
Denote by
\begin{equation}\label{eq:form A 2}
A=\{(x_i+py_i, z_i): 0\le i\le p-1 \}
\end{equation}
Due to (\ref{eq: (p,0)}), we may assume that 
$$
(x_i+py_i, z_i)\in H((p,0),ip)~\text{for all}~0\le i\le p-1.
$$
It follows that
$$
px_i+p^2y_i\equiv ip \mod p^2,~\forall~0\le i\le p-1.
$$
This implies that $x_i\equiv i \mod p$, for all $0\le i\le p-1.$
Thus $A$ has to be the form
$$
\{(i+py_i,z_i): 0\le i\le p-1  \}.
$$
We consider the following two cases.

\vspace{10pt}
\begin{itemize}
\item[(1)] All $z_i$ are the same for $0\le i\le p-1$.

\vspace{5pt}

It follows that not all $h_A((0,1), p_i)$ are the same for $0\le i\le p-1$. By Lemma \ref{lem:hyperplane}, we have $(0,1)\notin \ZZ_A$. Then $(0,1)\in \ZZ_B$. We define 
$$
\overline{B}=\{(x,z): x,z\in \Z_p \}\subset \Zptp.
$$
It is easy to see that $\sharp B=p^2$ and $(\overline{B}-\overline{B})\setminus\{0\}$ is contained in
$$
(0,1)(\Zptp)^*\bigcup \cup_{a\in \Z_p} (1,a) (\Zptp)^*,
$$
which is a subset of $\ZZ_B.$
Thus we obtain that $\overline{B}$ is a spectrum of $B$.

\vspace{5pt}

\item[(2)] Not all $z_i$ are the same for $0\le i\le p-1$.

\vspace{5pt}

By Lemma \ref{lem:existence of not compatible}, there exists $1\le c\le p-1$ such that the value $c$ is not compatible with $(i, z_i)_{0\le i\le p-1}$. It follows that not all $h_A((p,c), pi)$ are the same for $0\le i\le p-1$. 
Thus we have $(p,c)\notin \ZZ_A$. Then we have $(p,c)\in \ZZ_B$. Define 
$$
\widetilde{B}=\{(x+py,cy): x,y\in \Z_p \}.
$$
It is easy to see that $\sharp B=p^2$.
A simple computation shows that the set $(\widetilde{B}-\widetilde{B})\setminus\{0\}$ is contained in
$$
(p,c)(\Zptp)^*\bigcup \cup_{a\in \Z_p} (1,a) (\Zptp)^*,
$$
which is a subset of $\ZZ_B.$
Thus we conclude that $\widetilde{B}$ is a spectrum of $B$.

\end{itemize}

\vspace{10pt}
\textbf{Case 3:} $(p,0)\in \ZZ_B$ and $(1,0)\notin \ZZ_B$. It follows that $(1,0)\in \ZZ_A$. According to Case 2 in Proposition \ref{prop:sharp A=p}, we obtain that $A$ has the form
$$
\{x+pi,z_i: i\in \Z_p \},
$$
for some $x\in \Z_p$.
If not all $z_i$ are the same for $0\le i\le p-1$, then by Lemma \ref{lem:existence of not compatible}, there exists $c\in \Z_p^*$ such that the value $c$ is not compatible with $(i, z_i)_{0\le i\le p-1}$. It follows that not all $h_A((1,c), x+pi)$ are the same for $0\le i\le p-1$. 
Thus we have $(1,c)\notin \ZZ_A$. By Lemma \ref{lem:tiling equivalent}, we have $(1,c)\in \ZZ_B$. By Lemma \ref{lem:(p,0),(1,c)} and Lemma \ref{lem:(p,0),(1,c) tile and spectral}, we obtain that $B$ is a spectral set. Now suppose that all $z_i$ are the same for $0\le i\le p-1$. It is not hard to check that 
$$
h_A((0,1), pz_i)=p~\text{and}~h_A((p,a),p(x+az_i))=p,
$$ 
for all $0\le a\le p-1$ and all $0\le i\le p-1$.
By Lemma \ref{lem:hyperplane}, we have $(0,1), (p,a)\notin \ZZ_A$ for all $0\le a\le p-1$. Then $(0,1), (p,a)\in \ZZ_B$  for all $0\le a\le p-1$. We define 
$$
\overline{B}=\{(py,z): y,z\in \Z_p \}\subset \Zptp.
$$
It is easy to see that $\sharp B=p^2$ and
$$
(\overline{B}-\overline{B})\setminus\{0\} \subset (0,1)(\Zptp)^*\bigcup \cup_{a\in \Z_p} (p,a) (\Zptp)^*\subset \ZZ_B.
$$
Thus we obtain that $\overline{B}$ is a spectrum of $B$.

\section{Spectral sets $\Rightarrow$ Tiles}\label{Spectral sets implying Tiles}

Let $A$ be a spectral set in $\Zptp$. In this section, we will prove that $A$ is also a tile.

If $\sharp A=1$ or $p^3$, then $A$ is a tile trivially. We suppose that $1<\sharp A<p^3$. By Lemma \ref{lem:spectral set larger than p^2}, we have $\sharp A\le p^2$.
Let $B$ be a spectrum of $A$. By Lemma \ref{Equivalence of spectral set}, we have 
$$
(B-B)\setminus \{0\}\subset \ZZ_A.
$$
Since $\sharp B=\sharp A>1$, we have $\ZZ_A\not=\emptyset$. By Lemma \ref{lem:hyperplane}, we get $\sharp A\ge p$. If $\sharp A=p$, then by Proposition \ref{prop:sharp A=p}, we know that $A$ is a tile. Thus in what follows, we assume that $$p<\sharp A\le p^2.$$
By pigeon principle, we have
\begin{equation}\label{eq:(1,0)(p,0)in ZZ_A}
(1,0)~\text{or}~(p,0)\in (B-B)\setminus \{0\} \subset \ZZ_A.
\end{equation}
The same is for $\ZZ_B$.

Without loss of generality, we assume that $(0,0)\in A$ and $(0,0)\in B$. If $(A-A)\cap (\Z_{p^2}^* \times \Z_p)=\emptyset$ or $(B-B)\cap (\Z_{p^2}^* \times \Z_p)=\emptyset$, then we obtain 
$$
A\subset p\Z_{p^2} \times \Z_p~\text{or}~B\subset p\Z_{p^2} \times \Z_p.
$$
It follows that either $A$ or $B$ is in a non-trivial proper subgroup of $\Zptp$. By Lemma \ref{lem:subgroup}, we deduce that $A$ and $B$ are tiles.

Now we suppose that $(A-A)\cap (\Z_{p^2}^* \times \Z_p)\not=\emptyset$ and $(B-B)\cap (\Z_{p^2}^* \times \Z_p)\not=\emptyset$. This implies that there exists $c_1, c_2\in \Z_p$ such that 
\begin{equation}\label{eq:(1,c)in ZZ_A}
(1,c_1)\in \ZZ_A~\text{and}~(1,c_2)\in \ZZ_B.
\end{equation}

\begin{lem}\label{lem:(p,0) A or B}
	The sets $A$ and $B$ are as above. Then $(p,0)\in \ZZ_A$ or $(p,0)\in \ZZ_B$.
\end{lem}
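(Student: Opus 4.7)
The plan is to argue by contradiction, so suppose $(p,0) \notin \ZZ_A$ and $(p,0) \notin \ZZ_B$. Then (\ref{eq:(1,0)(p,0)in ZZ_A}) forces $(1,0) \in \ZZ_A \cap \ZZ_B$. Writing $A_i = \{(x,y) \in A : x \equiv i \bmod p\}$, Lemma \ref{lem:hyperplane} applied to $(1,0) \in \ZZ_A$ gives $|A_i| = p\alpha_i$, where $\alpha_i$ is the common value of $h_A((1,0),t)$ for $t \equiv i \bmod p$. The inclusion $(A-A)\setminus\{0\} \subset \ZZ_B$, together with $(p,0) \notin \ZZ_B$ and Lemma \ref{lem:hyperplane}(2), forbids every element of the equivalence class $\{(jp,0) : j \in \Z_p^*\}$ from appearing in $A-A$; equivalently, inside each slab $A_i$ no two points share a second coordinate. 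Combined with the uniform distribution over the $p$ first coordinates of the slab, this forces $\alpha_i \in \{0,1\}$, and when $\alpha_i = 1$ one has $A_i = \{(i+kp, \tau_i(k)) : k \in \Z_p\}$ for some bijection $\tau_i$ of $\Z_p$. Symmetrically, $\beta_i \in \{0,1\}$ and $B_i = \{(i+kp, \sigma_i(k)) : k \in \Z_p\}$ for bijections $\sigma_i$.

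Let $I_A = \{i : \alpha_i = 1\}$ and $I_B = \{i : \beta_i = 1\}$, so $|A| = p|I_A|$, $|B| = p|I_B|$, and by (\ref{eq:(1,c)in ZZ_A}) both $|I_A|, |I_B| \ge 2$. The next step is to derive that $(1,c) \in \ZZ_A$ for every $c \in \Z_p$. Picking distinct $i, i' \in I_B$, for every $k, k' \in \Z_p$ the difference
$$
b - b' = \bigl((i-i') + (k-k')p,\ \sigma_i(k) - \sigma_{i'}(k')\bigr) \in (B-B)\setminus\{0\} \subset \ZZ_A
$$
has first coordinate $\not\equiv 0 \bmod p$ and so lies in the equivalence class of $(1,\,(i-i')^{-1}(\sigma_i(k) - \sigma_{i'}(k')))$. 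Since $\sigma_i$ and $\sigma_{i'}$ are bijections, varying $k, k'$ makes the second coordinate difference range over all of $\Z_p$, so by Lemma \ref{lem:hyperplane}(2) we conclude $(1,c) \in \ZZ_A$ for every $c \in \Z_p$.

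To finish, write $\xi = e^{2\pi i/p^2}$ and $\omega = \xi^p$, and note that unwinding the inner product gives
$$
p^3\,\widehat{1}_A(1,c) = \sum_{y \in \Z_p} \omega^{cy}\,g_y(\xi), \qquad g_y(\xi) := \sum_{x : (x,y) \in A} \xi^x.
$$
Since this vanishes for every $c \in \Z_p$, discrete Fourier inversion on $\Z_p$ forces $g_y(\xi) = 0$ for every $y \in \Z_p$. The permutation structure identifies $g_y(\xi) = \sum_{i \in I_A} \xi^{s_i}$ with $s_i := i + p\tau_i^{-1}(y) \in \{0,\dots,p^2-1\}$ having pairwise distinct residues modulo $p$. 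The main obstacle is to show this sum cannot vanish: since the minimal polynomial of $\xi$ over $\Q$ is $\Phi_{p^2}(X) = 1 + X^p + \cdots + X^{(p-1)p}$, the vanishing of $g_y(\xi)$ would force $f(X) := \sum_{i \in I_A} X^{s_i}$ to equal $q(X)\Phi_{p^2}(X)$ for some $q \in \Z[X]$ of degree at most $p-1$. Expanding $q(X) = \sum_{j=0}^{p-1} q_j X^j$ shows that the coefficient of $X^{j+\ell p}$ in $f$ must be independent of $\ell \in \Z_p$ for each fixed $j$; but for $j = i \in I_A$ this coefficient equals $1$ at the single value $\ell = \tau_i^{-1}(y)$ and $0$ at the remaining $p-1 \ge 1$ values, which is impossible. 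This contradiction yields the desired conclusion.
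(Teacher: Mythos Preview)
Your argument is correct and follows the same overall arc as the paper: assume $(p,0)\notin\ZZ_A\cup\ZZ_B$, use $(1,0)\in\ZZ_A\cap\ZZ_B$ together with the forbidden difference $(jp,0)$ to pin down the ``graph--of--bijection'' structure of each nonempty slab $A_i,B_i$, and then exploit $|I_A|,|I_B|\ge 2$ to force $(1,c)$ into the zero set for every $c$. (One small quibble: the inequality $|I_A|,|I_B|\ge 2$ comes from the standing hypothesis $|A|=|B|>p$, not from \eqref{eq:(1,c)in ZZ_A}.)

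Where you diverge from the paper is in the final contradiction. The paper concludes via the combinatorial Lemma~\ref{lem:existence of not compatible}: once every $c\in\Z_p$ is compatible with the pairs $(g(x,z),z)_{z}$ attached to a slab, the fact that $g(x,\cdot)$ is a nonconstant bijection is directly violated. You instead invert the relation $\sum_{y}\omega^{cy}g_y(\xi)=0$ over $c\in\Z_p$ to obtain $g_y(\xi)=0$ for every $y$, and then use divisibility by $\Phi_{p^2}$ to rule out a vanishing sum $\sum_{i\in I_A}\xi^{s_i}$ whose exponents lie in distinct residue classes mod $p$. This is a clean, self-contained alternative that bypasses the Section~\ref{sec:Compatible values} machinery entirely; the paper's route, by contrast, reuses Lemma~\ref{lem:existence of not compatible}, which it also needs elsewhere (Cases~2 and~3 of Section~\ref{sec:Tiles implying Spectral sets}). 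Either endgame is short; yours has the advantage of being usable in isolation, while the paper's keeps the argument purely combinatorial and makes the ``compatible value'' notion earn its keep.
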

\begin{proof}
	We prove it by contradiction. We assume that $(p,0)\notin \ZZ_A$ and $(p,0)\notin \ZZ_B$. By (\ref{eq:(1,0)(p,0)in ZZ_A}), we have $(1,0)\in \ZZ_A$  and $(1,0)\in \ZZ_B$. 
	If there are two different elements $(x+py,z), (x+py',z)\in A$, then 
	$$
	(x+py,z)-(x+py',z)=(p(y-y'),0).
	$$
	Since $(y-y')\in \Z_{p^2}^*$, by Lemma \ref{Equivalence of spectral set}, we have $(p,0)\in \ZZ_B$ which is not possible. Thus there exists $E_{A} \subset \Z_p\times \Z_p$ and $f:E_A\to \Z_p$ such that
	$$
	A=\{(x+pf(x,z),z): (x,z)\in E_A \}.
	$$ 
	Observe that 
	$$
	h_A((1,0),x+pi)=\sharp \{z\in\Z_p: (x,z)\in E_A, f(x,z)=i \},
	$$
	for all $0\le i\le p-1$.
	Since $(1,0)\in \ZZ_A$, we have that $\{z\in\Z_p: (x,z)\in E_A, f(x,z)=i \}$ are constant for all $0\le i\le p-1$.
	By Lemma \ref{lem:hyperplane}, we obtain that 
	\begin{equation*}
	E_A=\widetilde{E}_A\times \Z_p~\text{for some}~\widetilde{E}_A\subset \Z_p,
	\end{equation*} 
	and 
	\begin{equation*}\label{eq:f_x}
	\widetilde{f}_{x}(y):=f(x,y)~\text{is surjective, for all}~x\in \widetilde{E}_A.
	\end{equation*}
	This means that
	\begin{equation*}
	A=\{(x+pf(x,z),z): x\in \widetilde{E}_A, z\in \Z_p \}.
	\end{equation*}
	The same is for $B$, that is to say, 
	\begin{equation*}
	B=\{(x+pg(x,z),z): x\in \widetilde{E}_B, z\in \Z_p \},
	\end{equation*}
	for some $\widetilde{E}_B\subset \Z_p$ and $g: \widetilde{E}_B \times \Z_p \to \Z_p$.  Since $\sharp A>p$, we have $\sharp \widetilde{E}_A>1$. It follows that
	\begin{equation*}
	(A-A)\cap (1,c)(\Zptp)^* \not=\emptyset, ~\text{for all}~c\in \Z_p.
	\end{equation*}
	By Lemma \ref{Equivalence of spectral set}, we have 
	$$
	(1,c)\in \ZZ_B~\text{for all}~c\in \Z_p.
	$$
	This implies that $c$ is compatible with $(g(x,z), z)_{0\le z\le p-1}$ for any $x\in \widetilde{E}_B$ and for any $0\le c\le p-1$. For any $x\in \widetilde{E}_B$, 
	since the map $g(x, \cdot): \Z_p \to \Z_p$ is subjective, it is bijective. This is contradict to Lemma \ref{lem:existence of not compatible}. Therefore, we conclude that  $(p,0)\in \ZZ_A$ or $(p,0)\in \ZZ_B$.
\end{proof}

Due to Lemma \ref{lem:(p,0) A or B}, without loss of generality,  we suppose $(p,0)\in \ZZ_B$. Since $(1,c_2)\in \ZZ_B$, by Lemma \ref{lem:(p,0),(1,c)} and Lemma \ref{lem:(p,0),(1,c) tile and spectral}, the set $B$ is a tile and $\sharp B=p^2$. If we also have $(p,0)\in \ZZ_A$, then repeating the same reason, we obtain that $A$ is a tile, which completes the proof. Now we suppose that $(p,0)\notin \ZZ_A$. By the proof of Lemma \ref{lem:(p,0) A or B}, we have
$$
B=\{(x+pg(x,z), z): x,z\in \Z_p \}
$$
for some $g: \Z_p\times \Z_p \to \Z_p$ with $g(x,\cdot)$ bijective for all $x\in \Z_p$. It follows that
$$
(B-B)\cap (1,c)(\Zptp)^* \not=\emptyset~\text{for all}~c\in \Z_p.
$$
By Lemma \ref{Equivalence of spectral set}, we have
$$
(1,c)\in \ZZ_A~\text{for all}~c\in \Z_p.
$$
Since $\sharp A=\sharp B>p$, by pigeon principle, we obtain that 
$$
(0,1)\in \ZZ_A,
$$
or there exists $d\in \Z_p^*$ such that 
\begin{equation*}
(p,d)\in \ZZ_A.
\end{equation*}

If $(p,d)\in \ZZ_A$ for some $d\in \Z_p^*$, then we define
$$
C=\{(j, (-d)^{-1}j): 0\le j\le p-1 \}.
$$
It is easy to see that $\sharp C=p$. Observe that $h_C((0,1), pi)=1$ for all $0\le i\le p-1$. By Lemma \ref{lem:hyperplane}, we have $(0,1)\in \ZZ_C$. On the other hand, by Lemma \ref{lem:compatible p-1}, the value $d'$ is compatible with $(j, (-d)^{-1}j)_{0\le j\le p-1}$ for all $0\le d'\le p-1$ with $d'\not=d$. Then we have $h_C((p,d'), pi)=1$ for all $0\le i\le p-1$ and for all $0\le d'\le p-1$ with $d'\not=d$. By Lemma \ref{lem:hyperplane}, we have $(p, d') \in \ZZ_C$ for all $0\le d'\le p-1$ with $d'\not=d$.
By Lemma \ref{lem:tiling equivalent}, we conclude that $C$ is a tiling complement of $A$.

If $(0,1)\in \ZZ_A$, then we define
$$
D=\{(j,0):0\le j\le p-1  \}.
$$
It is easy to see that $\sharp C=p$ and $h_C((p,d), pi)=1$ for all $0\le i\le p-1$ and all $0\le d\le p-1$.
By Lemma \ref{lem:hyperplane}, we have 
$$
(p, d)\in \ZZ_C~\text{for all}~0\le d\le p-1.
$$
By Lemma \ref{lem:tiling equivalent}, we conclude that $D$ is a tiling complement of $A$.
 This completes the proof.


\end{document}